\providecommand{\noopsort}[1]{}
\theoremstyle{definition} 
 \newtheorem{definition}{Definition}[section]
 \newtheorem{remark}[definition]{Remark}
\newtheorem*{notation}{Notations}
\theoremstyle{plain}      
 \newtheorem{proposition}[definition]{Proposition}
 \newtheorem{theorem}[definition]{Theorem}
 \newtheorem{corollary}[definition]{Corollary}
\newcommand*{\house}[1]{
  \mathord{
    \mathpalette\@house{#1}
  }
}
\newcommand*{\@house}[2]{
  \dimen@=\fontdimen8 %
      \ifx#1\scriptscriptstyle\scriptscriptfont
      \else\ifx#1\scriptstyle\scriptfont
      \else\textfont\fi\fi
      3 %
  \sbox0{%
    $#1%
      \vrule width\dimen@\relax
      \overline{%
        \kern2\dimen@
        \begingroup 
          #2%
        \endgroup
        \kern2\dimen@
      }
      \vrule width\dimen@\relax
      \mathsurround=1.5\dimen@ 
    $
  }
  \ht0=\dimexpr\ht0-\dimen@\relax
  \dp0=\dimexpr\dp0+2\dimen@\relax
  \vbox{
    \kern\dimen@ 
    \copy0 
  }
}
\newcommand{\rb}{\mathbb{R}}
\newcommand{\pb}{\mathbb{P}}
\newcommand{\cb}{\mathbb{C}}
\newcommand{\fb}{\mathbb{F}}
\newcommand{\zb}{\mathbb{Z}}
\newcommand{\qb}{\mathbb{Q}}
\newcommand{\nb}{\mathbb{N}}
\def\11{{\mathbf 1}}
\theoremstyle{remark}
\newtheorem{exampl}[subsubsection]{Example}
\def\bee{\begin{exampl}}
\def\eee{\end{exampl}}
\def\bn{\begin{notation}}
\def\en{\end{notation}}
\def\br{\begin{remark}}
\def\er{\end{remark}}
\def\bp{\begin{prop}}
\def\ep{\end{prop}}
\def\bpr{\begin{proof}}
\def\epr{\end{proof}}
\def\bt{\begin{thm}}
\def\et{\end{thm}}
\def\be{\begin{equation}}
\def\ee{\end{equation}}
\def\bl{\begin{lem}}
\def\el{\end{lem}}
\def\bc{\begin{cor}}
\def\ec{\end{cor}}
\def\bd{\begin{defn}}
\def\ed{\end{defn}}
\author{Denys Dutykh$^{\ddag}$}
\thanks{}
\address{$^{\ddag}$
Univ. Grenoble Alpes, Univ. Savoie Mont~Blanc,
LAMA, CNRS UMR 5127,
\newline
F - \!73000 Chamb\'ery, France}
\email{denys.dutykh@univ-smb.fr}
\urladdr{}
\author{Jean-Louis Verger-Gaugry$\mbox{}^{\Diamond}$ }
\thanks{}
\address{$\mbox{}^{\Diamond}$ 
Univ. Grenoble Alpes, Univ. Savoie Mont~Blanc,
LAMA, CNRS UMR 5127,
\newline
F - \!73000 Chamb\'ery, France}
\email{Jean-Louis.Verger-Gaugry@univ-smb.fr}
\urladdr{}
\title[On a class of lacunary
almost Newman polynomials modulo $p$ and
density
theorems]
{On a class of lacunary
almost Newman polynomials modulo $p$ and
density
theorems
}
\begin{document}

\title{On a class of lacunary
almost Newman polynomials modulo $p$ and
density
theorems}

\begin{abstract}
The reduction modulo $p$ of
a family of lacunary integer polynomials,
associated with the dynamical zeta function
$\zeta_{\beta}(z)$ of the 
$\beta$-shift, for
$\beta > 1$ close to one, 
is investigated.
We briefly recall how this family
\textcolor{black}{is} correlated to the problem of Lehmer.
A variety of questions is raised about
\textcolor{black}{their numbers of zeroes in $\fb_p$
and}
their
\textcolor{black}{
factorizations,} 
via
Kronecker's Average Value Theorem
(viewed as an analog of
classical Theorems of Uniform Distribution Theory).
\textcolor{black}{These questions are} partially answered
using results of Schinzel, revisited by
Sawin, Shusterman and Stoll, and 
density theorems (Frobenius, 
Chebotarev, Serre, Rosen). 
These questions arise from the 
\textcolor{black}{
search for}
the existence of integer polynomials of Mahler measure
$> 1$ less than the smallest Salem number $1.176280$.   
Explicit connection with modular forms 
(or modular representations) of the numbers of zeroes
\textcolor{black}{of these polynomials}
in $\fb_p$ is \textcolor{black}{obtained in a few cases.
In general it is expected
since it must exist 
according to} the Langlands program. 
\end{abstract}

\maketitle

\vspace{0.7cm}

\noindent
Keywords:
Lacunary integer polynomial, zeroes,
factorization, Lehmer's problem,
Chebotarev density theorem, Frobenius density theorem,
number of zeroes modulo p.

\vspace{0.5cm}

\noindent
2020 Mathematics Subject Classification:
11C08, 11R09, 11R45, 12E05, 13P05.

\tableofcontents

\numberwithin{equation}{section} \numberwithin{figure}{section}
\newpage
\section{Introduction}
\label{S1}

In this paper we will address a
number of arithmetic questions
on a class of lacunary integer polynomials
which have all their coefficients in 
$\{0, 1\}$ except the constant term
equal to $-1$. We have called
such polynomials 
{\it almost Newman polynomials}
in \cite{dutykhvergergaugry18}, by comparison with
Newman polynomials which all
have their coefficients
in $\{0, 1\}$. 
\textcolor{black}{Recall that, with probability one,
Newman polynomials are irreducible 
\cite{breuillardvarju}}.

This class is the following.
For $n \geq 2$,
we denote by $\mathcal{B}$ the class of 
lacunary polynomials
$$f(x) := -1 + x + x^n +
x^{m_1} + x^{m_2} + \ldots + x^{m_s}$$
where $s \geq 0$, $m_1 - n \geq n-1$, 
$m_{q+1}-m_q \geq n-1$ for $1 \leq q < s$,
and by
$\mathcal{B}_n$ those whose third monomial is exactly
$x^n$, so that
$$\mathcal{B} =
\cup_{n \geq 2} \mathcal{B}_n .$$
The case ``$s=0$" corresponds to 
the trinomials $G_{n}(z) :=
-1+z+z^n$.
This class
admits a very special type of lacunarity.

This class
appears naturally in the study of 
the existence of reciprocal \textcolor{black}{integer}
 polynomials
having small Mahler measure
\cite{smyth}: 
\textcolor{black}{in the Appendix we recall how
this class of polynomials is arising from
the dynamical zeta function $\zeta_{\beta}(z)$
of the $\beta$-shift (equivalently from the Parry Upper function $f_{\beta}(z)$), 
when $\beta > 1$ is close to one.}
\textcolor{black}
{Recall that the Mahler measure
of a nonzero algebraic number $\beta$,
of minimal polynomial
$P_{\beta}(X)=a_0 X^m + a_1 X^{m-1}
+\ldots + a_m = a_0 \prod_i (X-\alpha^{(i)})
\in \zb[X]$, is
${\rm M}(\beta) = |a_0|\prod_i \max\{1, |\alpha^{(i)}|\} 
=:
{\rm M}(P_{\beta})$.}
\textcolor{black}{
The search for non-trivial} Mahler measures of 
reciprocal integer polynomials smaller
than the smallest Salem number known $1.176280\ldots$
\textcolor{black}{(Lehmer's number)}
is of particular interest in the Problem of Lehmer
\cite{smyth}
\cite{vergergaugryP}.
\textcolor{black}{
When $\beta > 1$ is close to one,
it is shown
in \cite{dutykhvergergaugry21} 
\cite{vergergaugryv8}
that a subcollection of zeroes
(the lenticular zeroes, cf Section \ref{S2}) 
of $f_{\beta}(z)$ is at the origin
of a nontrivial (universal) minorant of
${\rm M}(\beta)$. 
By Hurwitz Theorem,
the zeroes of $f_{\beta}(z)$ 
in the open unit disk of $\cb$ are limit points
of zeroes of polynomials of the class 
$\mathcal{B}$, which are its polynomial sections.
It is the reason why this class of lacunary
almost Newman polynomials is important
for the Problem of Lehmer.
}

\

\textcolor{black}{
By analogy, there is interest in studying the 
zeroes of $f(z)$
modulo $p$, for any $f \in \mathcal{B}$
and any prime number $p$.}
In the present note, we start the investigation of
(i) 
the number of zeroes $N_{p}(f)$
of the polynomials
$f \in \mathcal{B}$ 
in $\fb_p$, i.e.
modulo a
prime number $p$, as a function of $p$, $n$,
and the type of lacunarity of $f$ characterized
by the sequence $(m_j)_{j=1,\ldots,s}$,
(ii) 
the subcollection of prime numbers $p$
for which $N_{p}(f)$ is equal to zero or is maximal,
(iii) 
the asymptotics 
\textcolor{black}{of the averages of} $N_{p}(f)$ when 
$p$ tends to infinity.

\

\textcolor{black}{
For a nonzero integer polynomial
$f(X) \in \zb[X]$ we denote by
$c(f)$ the greatest common divisor of its
coefficients. 
Let $\zb[X]^c : = \{f \in \zb[X] \setminus \{0\}:
c(f) =1\}$.
Let $\mathcal{N}$ be the map
$\zb[X]^c \to (\nb)^{\pb},
f \to (N_{p}(f))_{p \in \pb}$.
Obviously, for
$f_{1}, f_2 \in \zb[X]^c$,
 $\mathcal{N}(f_1 f_2)=
\mathcal{N}(f_1) + \mathcal{N}(f_2)$
since, for any $p \in \pb$,
$N_{p}(f_1 f_2) = N_{p}(f_1) +
N_{p}(f_2)$. 
Now, integrating 
$\mathcal{N}$ over the set of prime numbers
$\pb$ and taking the limit average with respect to
$\pi(x)$, which is as usual 
the number of primes  $p \leq x$, gives the following result (proof in Section \ref{S3}).}

\newpage
\textcolor{black}{
\begin{theorem}
\label{mainN}
Let $f \in \zb[X]^c$. 
If $f = \prod _{i \in J} f_{i}^{\nu_i}$
is the decomposition of $f$ into irreducible factors,
with all $\nu_i \geq 1$, then
\begin{equation}
\label{kroneckeraverage}
\lim_{\textcolor{black}{x} \to \infty}
\frac{1}{\pi(x)}\sum_{p \leq x}
N_{p}(f)
=
\sum_{i \in J} \nu_i
\end{equation}
which is the number of irreducible factors
in the decomposition of $f$. 
\end{theorem}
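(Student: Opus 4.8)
The key observation is that the average value $\lim_{x\to\infty}\frac{1}{\pi(x)}\sum_{p\le x}N_p(f)$ is additive on products (as noted in the excerpt, $\mathcal N(f_1f_2)=\mathcal N(f_1)+\mathcal N(f_2)$), so by the factorization $f=\prod_{i\in J}f_i^{\nu_i}$ it suffices to prove the statement for a single irreducible polynomial $g\in\zb[X]^c$, namely that
\[
\lim_{x\to\infty}\frac{1}{\pi(x)}\sum_{p\le x}N_p(g)=1 .
\]
Indeed, granting this, $\lim_{x\to\infty}\frac{1}{\pi(x)}\sum_{p\le x}N_p(f)=\sum_{i\in J}\nu_i\cdot 1=\sum_{i\in J}\nu_i$, and the right-hand side is by definition the number of irreducible factors of $f$ counted with multiplicity.  (One should note at the outset that multiplicities are handled correctly by $N_p$: for all but finitely many $p$—those not dividing the resultant/discriminant data—$N_p(g^\nu)=N_p(g)$ as a \emph{count of points}, but as the additive invariant $\mathcal N$ it is $\nu\,N_p(g)$; this bookkeeping must be stated cleanly, and since only finitely many $p$ are affected it does not change the limit average.)

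**Reduction to a single irreducible polynomial.**  Let $g\in\zb[X]^c$ be irreducible of degree $d$, let $K=\qb(\alpha)$ where $g(\alpha)=0$, and let $L$ be the Galois closure of $K/\qb$ with $G=\mathrm{Gal}(L/\qb)$ and $H=\mathrm{Gal}(L/K)$, so $[G:H]=d$.  For a prime $p$ unramified in $L$ and not dividing $c(g)$ nor the leading coefficient of $g$ (a finite exceptional set), the number $N_p(g)$ of roots of $g$ in $\fb_p$ equals the number of degree-one prime ideals of $\mathcal O_K$ above $p$, which by the standard factorization/Frobenius dictionary equals the number of fixed points of a Frobenius element $\mathrm{Frob}_p\in G$ acting on the coset space $G/H$.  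Write $\mathrm{fix}(\sigma)=\#\{gH\in G/H:\sigma gH=gH\}$ for this permutation-character value.

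**Applying Chebotarev and Burnside.**  By the Chebotarev density theorem, for each conjugacy class $C\subset G$ the primes with $\mathrm{Frob}_p\in C$ have natural density $\#C/\#G$; hence the Cesàro average of $N_p(g)$ over $p\le x$ converges to the average of $\mathrm{fix}(\sigma)$ over $\sigma\in G$ with respect to the uniform measure:
\[
\lim_{x\to\infty}\frac{1}{\pi(x)}\sum_{p\le x}N_p(g)
=\frac{1}{\#G}\sum_{\sigma\in G}\mathrm{fix}(\sigma).
\]
By Burnside's lemma (the Cauchy–Frobenius counting formula), $\frac{1}{\#G}\sum_{\sigma\in G}\mathrm{fix}(\sigma)$ equals the number of orbits of $G$ on $G/H$, which is $1$ since $G$ acts transitively on $G/H$.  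This gives the single-polynomial case, and summing over $i\in J$ with multiplicities $\nu_i$ finishes the proof.

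**Where the work lies.**  The statement is in essence Kronecker's average-value theorem dressed in the language of $N_p$, so the proof is more a careful assembly than a hard argument; the one genuine point requiring care is the interchange of limit and finite sum when passing from "$\mathrm{Frob}_p$ equidistributes" to "the average of $N_p(g)$ converges", i.e. justifying that the finitely many ramified or otherwise exceptional primes are negligible in the $\frac{1}{\pi(x)}$-normalized sum (immediate, since their number is $O(1)$ and each contributes $O(d)$), and more substantively that the density statement for each of the finitely many conjugacy classes can be summed termwise against the bounded function $\mathrm{fix}$.  Since $\mathrm{fix}$ takes finitely many values and $G$ has finitely many conjugacy classes, this is just a finite linear combination of Chebotarev densities, so no uniformity issue arises.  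I would present the argument in exactly this order: additivity reduces to irreducible $g$; Frobenius identifies $N_p(g)$ with $\mathrm{fix}(\mathrm{Frob}_p)$ off a finite set; Chebotarev plus Burnside evaluates the average as $1$; sum back up.
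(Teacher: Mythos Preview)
Your argument is correct and is the standard proof of Kronecker's Average Value Theorem: reduce by additivity to a single irreducible $g$, identify $N_p(g)$ with the number of fixed points of $\mathrm{Frob}_p$ acting on $G/H$, then combine Chebotarev equidistribution with Burnside's lemma to obtain average $1$. The paper itself gives no proof --- its Section~\ref{S3} merely records that the statement is Kronecker's theorem and cites Rosen \cite{rosen} for a proof --- so you have supplied exactly the details the paper defers; Rosen's argument is essentially yours (phrased via the Frobenius density theorem, which already suffices since the fixed-point count is a class function).
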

}
\textcolor{black}{
By the Langlands program closed formulas are expected
between the values $N_{p}(f)$
and the coefficients $a(n)$ of Newforms
$\sum a(n) q^n$. 
Key properties of Newforms, geometrical objects
attached to the coefficients,
can be found e.g. in
\cite{cohenstromberg}
\cite{ono}.
}

\textcolor{black}{
Let us observe that \eqref{kroneckeraverage}
is an analog of classical Theorems of Uniform Distribution \cite{kuipersniederreiter}
\cite{strauch} (cf Section \ref{S3}).
}

\textcolor{black}{
Therefore the present study on the class
$\mathcal{B}$ starts a concrete investigation
not only of the quantities 
$N_{p}(f), f \in \mathcal{B}$, and
the associated Newforms,
but also of their
limit averages by \eqref{kroneckeraverage},
which corresponds to the factorization of $f$.
}

\textcolor{black}{
\begin{remark}
In the 
literature, 
following Lehmer's work,
there are two ``Conjectures of Lehmer", 
which a priori are completely independent. 
The first one
is evoked in 
\cite{smyth} \cite{vergergaugryP}, and is concerned
by a universal minorant of the Mahler measure
of reciprocal nonzero algebraic 
integers which are not roots of unity. 
It is of concern in this study.
The second one is related to
the $\tau$-function of Ramanujan.
The present work, though trying to correlate
the Problem of Lehmer modulo $p$ to modular forms
(Newforms) has no objectives
to try to link the two Conjectures. The authors 
have no idea if the two Conjectures are linked. 
\end{remark}} 

The factorization of $f \in \mathcal{B}$
as a function of its reductions modulo $p$
is a deep question
\cite{lenstrastevenhagen}
\cite{stevenhagenlenstra}. 
It is well-known that a monic integer polynomial is irreducible over the rationals if 
it is irreducible modulo some prime.
The converse is not true in general
\cite{brandl}.
\textcolor{black}{Even if} 
$f \in \mathcal{B}$ is irreducible a first question is about the density of 
the set of prime numbers
$p$ such that $N_{p}(f) = 0$.
Indeed Guralnick, Schacher and Sonn
\cite{guralnickschachersonn}, then
Gupta \cite{gupta}, have shown the existence
of irreducible integer polynomials which
are reducible modulo all primes.
Whether the factorization of $f$ modulo $p$
contains linear factors is a basic question.
If it is so, it is natural to say that
the number 
$N_{p}(f)$ can take a priori
any value between 1 and
the maximal value $\deg (f)$.

By Conjecture ARC 
(Conjecture \eqref{ARCconjecture} below) 
only 75\% of
the polynomials $f$ in $\mathcal{B}$ are
irreducible. 
General irreducibility criteria for
the polynomials
$f$ in $\mathcal{B}$ are missing.
Theorem \ref{mainN} (ii)
(Kronecker's Average Value Theorem), 
applied to each irreducible $f \in \mathcal{B}$,
says
$$
\lim_{p \to \infty}
\frac{1}{\pi(x)}\sum_{p \leq x}
N_{p}(f)
=
1,$$
preventing the values $N_{p}(f) = \deg(f)$
to occur often, since
the sum $\sum_{p \leq x}
N_{p}(f)$ behaves like $x/\log(x)$ at infinity.
If $f \in \mathcal{B}$ is irreducible and
$\deg(f)$ is a prime number, then $\mathcal{N}(f)
= (N_{p}(f))_{p \in \pb}$ is such that infinitely many
$N_{p}(f)$ are equal to 0 \cite{stevenhagenlenstra}.
Examples of the distribution of
values $N_{p}(S_j)$ 
of the (non-reciprocal parts of the) polynomial sections
$S_{j}(x)$ of the Parry Upper function
$f_{\tau}(x) = -1/\zeta_{\tau}(x)$,
where $\tau$ is Lehmer's number
(cf Section \ref{S2} for its definition),
are given in Section \ref{S5}.

\

\textcolor{black}{
The density Theorems of Frobenius and Chebotarev
\cite{chebotarev}
play a role in the frequencies of values
taken by $N_{p}(f)$ as $p$ varies in general.
For the class $\mathcal{B}$}
we summarize this in Theorem \ref{densitythm}
below. 
The question
of the non-existence of factors of 
degree 1 (the ``$N_{p}(f) =0$" case)
in the factorization of $f$
modulo $p$, 
is partially answered by a general theorem of Serre
\textcolor{black}{(Serre's Density Theorem 
\cite{serre})},
which gives a positive 
lower bound on the density of such primes
(reported 
in Theorem \ref{densitythm} (i)).

The density is taken in the following sense:
a subset $S$ of the set of primes 
$\mathbb{P}$ has density 
$c$ if
$$\lim_{x \to \infty}
\frac{\mbox{number of}~ p \in S ~\mbox{with}~ p \leq x}
{\pi(x)}= c.
$$ 
The limit need not exist.
When it exists the natural density of 
$S$ is said to be defined; then it is
denoted by $\delta(S)$.

\begin{theorem}
\label{densitythm}
Let $n \geq 3$. Let
$$f(x) := -1 + x + x^n +
x^{m_1} + x^{m_2} + \ldots + x^{m_s} \quad
\in \mathcal{B}$$
where $s \geq 0$, $m_1 - n \geq n-1$, 
$m_{q+1}-m_q \geq n-1$ for $1 \leq q < s$.
The polynomial $f$ is assumed irreducible.
Denote by G the Galois group of $f(x)$
and $g := \#G$.
Then 
\begin{enumerate}
\item[(i)]
the set 
$\mathcal{P}_0 :=\{p \in \mathbb{P}
\mid N_{p}(f) = 0\}$
is infinite, has a density and its density
satisfies
$$\delta(\mathcal{P}_0) \geq \frac{1}{m_s}, $$
with strict
inequality if $m_s$ is not a power of a prime,
\item[(ii)]
the set 
$\mathcal{P}_{max} :=\{p \in \mathbb{P}
\mid N_{p}(f) = \deg(f)\}$
is infinite and has density
$$\delta(\mathcal{P}_{max}) = \frac{1}{g}. $$
\end{enumerate}
\end{theorem}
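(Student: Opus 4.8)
The plan is to derive both statements from Chebotarev's density theorem applied to the splitting field $L$ of $f$ over $\qb$, after dealing with the finitely many ramified primes. First I would observe that $f$, being irreducible with $f(0) = -1$, has nonzero discriminant, so all but finitely many primes $p$ are unramified in $L$; for such $p$ the factorization type of $f \bmod p$ is governed by the cycle type of a Frobenius element $\mathrm{Frob}_p$ acting on the $n' := \deg(f) = m_s$ roots of $f$, and $N_p(f)$ equals the number of fixed points of that permutation. Thus $N_p(f) = \deg(f)$ precisely when $\mathrm{Frob}_p$ is the identity, i.e. when $p$ splits completely in $L$; by Chebotarev the density of such primes is $1/[L:\qb] = 1/g$, and this set is infinite. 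Excluding the finitely many ramified primes does not change density or infinitude, which proves (ii).

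For (i), $N_p(f) = 0$ means $\mathrm{Frob}_p$ (as a permutation of the roots) is fixed-point-free, equivalently $f \bmod p$ has no root in $\fb_p$. The set of such Frobenius classes is a union $C$ of conjugacy classes of $G \hookrightarrow S_{n'}$ consisting of fixed-point-free permutations; by Chebotarev, $\delta(\mathcal{P}_0) = |C|/g$ exists. To get the lower bound $1/m_s$ I would invoke the argument behind Jordan's theorem / Serre's density theorem: a transitive permutation group $G$ of degree $n' = m_s > 1$ always contains a fixed-point-free element (this is Jordan's lemma), so $C \ne \emptyset$ and $\mathcal{P}_0$ is infinite; moreover the quantitative form (Serre, using Cauchy–Frobenius/Burnside counting of fixed points) gives that the proportion of fixed-point-free elements in a transitive subgroup of $S_{n'}$ is at least $1/n'$, with equality only when that is achieved, which forces $n' = m_s$ to be a prime power (the equality case being a Frobenius group, realizable only in prime-power degree). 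So $\delta(\mathcal{P}_0) \geq 1/m_s$ with strict inequality unless $m_s$ is a prime power.

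The main obstacle is the equality-case analysis in (i): one must show that $|C|/g = 1/m_s$ forces $m_s$ to be a prime power. The clean way is the Cauchy–Frobenius counting identity $\sum_{\sigma \in G} |\mathrm{Fix}(\sigma)| = g$ (transitivity, one orbit), so writing $t$ for the number of fixed-point-free elements we get $g = \sum_{\sigma \ne \mathrm{fpf}} |\mathrm{Fix}(\sigma)| \geq (g - t)$, whence $t \leq g - (g - t)$ is vacuous; the sharp inequality $t \geq g/n'$ instead comes from a convexity/second-moment refinement due to Cameron–Cohen, and the extremal groups are exactly the Frobenius groups with kernel of order $n'$ — these exist only for $n'$ a prime power. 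I would cite Serre's paper for the bound $\delta(\mathcal{P}_0) \ge 1/m_s$ and the characterization of the equality case directly, rather than reproving it, since the excerpt already grants us ``Serre's Density Theorem.'' The remaining steps — unramifiedness of almost all $p$, the dictionary between $\mathrm{Frob}_p$-cycle-type and the factorization of $f \bmod p$, and the harmless removal of finitely many primes — are standard and I would state them briefly with references to \cite{serre}.
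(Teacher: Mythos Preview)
Your proposal is correct and aligns with the paper's treatment, which gives no self-contained proof at all: the paper simply attributes (i) to Serre \cite{serre} and (ii) to Corollary~2 of Rosen \cite{rosen} (itself a consequence of the Frobenius density theorem). Your sketch is precisely what lies behind those citations: Chebotarev (or Frobenius) density applied to the splitting field, the dictionary $N_p(f) = |\mathrm{Fix}(\mathrm{Frob}_p)|$, and the Cameron--Cohen/Serre lower bound $\geq 1/\deg(f)$ on the proportion of derangements in a transitive group with the prime-power characterization of equality.

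The only visible difference is that for (ii) you invoke Chebotarev whereas the paper invokes Frobenius (via Rosen); since the identity element is its own conjugacy class in $G$ and also its own cycle-type class, either theorem yields $\delta(\mathcal{P}_{\max}) = 1/g$, so this is cosmetic. One small remark: your aside attempting a direct Cauchy--Frobenius count correctly notices that the naive first-moment identity does not by itself give $t \geq g/m_s$; you then rightly defer to the Cameron--Cohen second-moment argument and to Serre for the equality case. Since the paper itself only cites \cite{serre} for this, your plan to do the same is exactly what is expected here.
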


Theorem \ref{densitythm} (i) is due to Serre; 
the proof of (i) is given  in \cite{serre}. 
The statement of (ii) is Corollary 2 in
Rosen \cite{rosen}. 
This Corollary 2 is a consequence
of a Theorem of Frobenius 
(Theorem 2 in \cite{rosen}).

\textcolor{black}{In Section \ref{S4}} we  
continue the direct study
of the factorization of the polynomials
$f$ in the class $\mathcal{B}$, initiated
in \cite{dutykhvergergaugry18}.
Indeed this study has left open
the problem of the existence of 
reciprocal non-cyclotomic components in the
factoring of any such $f$.
The main theorem
on the factorization of all
$f \in \mathcal{B}$
is Theorem
\ref{thm1factorization}
\cite{dutykhvergergaugry18}.
We recall in Section \ref{S2} 
the Asymptotic Reducibility Conjecture 
(``ARC") which states that
the probability of
finding a polynomial
$f$ in $\mathcal{B}$ which is irreducible
is $3/4$, and
 the Conjecture ``B", which states
the non-existence
of a reciprocal non-cyclotomic
component in the factorization of
a polynomial $f$ in $\mathcal{B}$.
We revisit Conjecture B 
using a Theorem of Schinzel and
a recent new lower bound by
Sawin, Shusterman and Stoll
\cite{sawinshustermanstoll} for large gaps.
We prove that Conjecture B 
is valid on some infinite
subclasses of $\mathcal{B}$.
For doing this,
we consider the
new bound for large gaps given in 
\cite{sawinshustermanstoll} as 
a new critical value above which  
Conjecture B is always true.
Then, at intermediate lacunarity,
for moderate gaps below this critical value,
we show numerically that Conjecture B is 
also true.
The subclasses considered are
families of
pentanomials
in $\mathcal{B}$, chosen from
the quadrinomials studied by
Finch and Jones \cite{finchjones}.

\

The 
Problem of Lehmer
modulo $p$, adressed to the class $\mathcal{B}$,
with its asymptotics when 
$p$ tends to infinity, also
calls for 
understanding the interplay between
the asymptotics of $N_{p}(f)$
and the peculiar lacunarity of $f$, for any
$f \in \mathcal{B}$.
The importance of the problem of the factorization of lacunary polynomials was outlined
in a series of papers by
Schinzel \cite{schinzel69}
\cite{schinzel76}
\cite{schinzel78}
\cite{schinzel83}
\cite{schinzel00}.
The link between lacunarity, say the 
geometry of the gappiness, 
and irreducibility
of any
$f \in \mathcal{B}$, 
is emphasized
in Corollary 1.4 in
\cite{sawinshustermanstoll}, as follows.

\begin{theorem}\cite{sawinshustermanstoll}
\label{thmprogressions}
Let $f \in \mathcal{B}$,
and write it as a
polynomial $f(x) =
g_{N}(x) =
d(x) + x^N c(x^{-1})$ as
in \eqref{gNdefinition} 
\textcolor{black}{and}
\eqref{fgN}.
Under the assumptions of Schinzel's Theorem 
\ref{schinzelthm1_2},
the set  of 
$N > \deg c + \deg d$ such that
$f$ is irreducible is the complement of the union
of a finite set with a finite union of
arithmetic progressions.
\end{theorem}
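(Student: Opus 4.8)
The plan is to deduce the statement from Schinzel's Theorem~\ref{schinzelthm1_2} by passing through the canonical factorisation of an integer polynomial into its non-reciprocal and reciprocal parts, and then tracking the finitely many mechanisms that can force reducibility as $N$ grows. First I would rewrite the whole family $(g_{N})_{N}$ as a one-parameter specialisation of a single bivariate polynomial. Setting $c^{*}(x) := x^{\deg c}\, c(1/x)$ for the reciprocal of $c$, one has $x^{\deg c} g_{N}(x) = x^{\deg c} d(x) + x^{N} c^{*}(x) = \widetilde{F}(x, x^{N})$, where $\widetilde{F}(x,y) := x^{\deg c} d(x) + y\, c^{*}(x) \in \zb[x,y]$ is linear in $y$; since $g_{N}(0) = d(0) = -1 \neq 0$, the polynomial $g_{N}$ is, up to the monomial factor $x^{\deg c}$, exactly the $x$-free part of $\widetilde{F}(x, x^{N})$. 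This is precisely the setting in which Schinzel's Theorem~\ref{schinzelthm1_2} applies, its non-degeneracy hypotheses ruling out in particular the degenerate situations $d(x) = \pm x^{a} c(x)$ and $d(x) = \pm x^{a} c^{*}(x)$.

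Next I would split $g_{N} = \kappa_{N}\, g_{N}^{+}\, g_{N}^{-}$, with $\kappa_{N} \in \zb$ the content (here $\pm 1$, since every $f \in \mathcal{B}$ is primitive), $g_{N}^{-}$ the product of all reciprocal irreducible factors counted with multiplicity, and $g_{N}^{+}$ the non-reciprocal part. Schinzel's Theorem~\ref{schinzelthm1_2} controls $g_{N}^{+}$: its factorisation type is eventually periodic in $N$, so there are an integer $M_{0}$ and a finite list of arithmetic progressions outside which, for $N > M_{0}$, $g_{N}^{+}$ is irreducible of maximal possible degree. Hence the set of $N$ for which the non-reciprocal part fails to be irreducible of full degree is a finite set together with a finite union of arithmetic progressions.

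It then remains to control $g_{N}^{-}$. For the cyclotomic factors I would note that $\Phi_{k} \mid g_{N}$ forces, at a primitive $k$-th root of unity $\zeta$, the relation $\zeta^{N} = -\zeta^{\deg c}\, d(\zeta)/c^{*}(\zeta)$, which depends only on $N \bmod k$; this confines the admissible $N$ to a single residue class modulo $k$ and, via the forced equality of absolute values $|d(\zeta)| = |c(\zeta)|$, shows that only finitely many $k$ can occur once the degenerate cases above are excluded (the real trigonometric polynomial $|d(\zeta)|^{2} - |c(\zeta)|^{2}$ is not identically zero on the unit circle, so it vanishes at only finitely many roots of unity). So the cyclotomic contribution to reducibility is again a finite union of arithmetic progressions.

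The hard part will be the non-cyclotomic reciprocal factors of $g_{N}$ --- precisely the phenomenon that Conjecture~B predicts to be absent, and the one that Schinzel's periodicity does not by itself rule out. Here I would invoke the large-gap lower bound of \cite{sawinshustermanstoll}: once the gap $N - (\deg c + \deg d)$ exceeds the explicit threshold it furnishes, $g_{N}$ can carry no non-cyclotomic reciprocal factor, so this occurs for only finitely many $N$. Combining the three inputs, for $N > \deg c + \deg d$ the polynomial $f = g_{N}$ fails to be irreducible exactly when $N$ lies in a fixed finite union of arithmetic progressions (periodicity of the non-reciprocal part and cyclotomic divisibility) or in a fixed finite set (the sporadic Schinzel exceptions together with the Sawin--Shusterman--Stoll threshold for non-cyclotomic reciprocal factors); equivalently, the set of $N$ for which $f$ is irreducible is the complement of the union of a finite set with a finite union of arithmetic progressions. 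The genuine work in turning this plan into a proof is the uniformity bookkeeping: each of the three ingredients must yield only finitely many progressions, with moduli and exceptional threshold bounded purely in terms of $c$ and $d$ --- which is exactly what Schinzel's theorem and the large-gap estimate are built to provide.
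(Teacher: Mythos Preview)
The paper does not supply its own proof of this statement: it is quoted verbatim as Corollary~1.4 of \cite{sawinshustermanstoll}, so there is nothing in the paper to compare your argument against line by line. That said, your sketch is essentially the correct derivation of that corollary from the two ingredients the paper \emph{does} record, namely Theorem~\ref{schinzelthm1_2} (Schinzel) and Corollary~\ref{coroll1_3} (Sawin--Shusterman--Stoll). The logical skeleton --- (a) non-reciprocal part irreducible for $N>N_{0}$, (b) no non-cyclotomic reciprocal factor for $N>\max\{N_{0},N_{1}\}$, (c) cyclotomic divisibility $\Phi_{k}\mid g_{N}$ depending only on $N\bmod k$ with finitely many admissible $k$ --- is exactly right, and your verification that $|d(\zeta)|^{2}-|c(\zeta)|^{2}$ is not identically zero on the unit circle does use the hypothesis $\gcd_{\zb[x]}(c^{*},d)=1$ together with $c\neq\pm d$ in the intended way.

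Two small points of tidying. First, you invoke Schinzel's theorem as giving an ``eventually periodic factorisation type'' of $g_{N}^{+}$; in the form stated here (Theorem~\ref{schinzelthm1_2}) it gives something stronger and simpler --- $g_{N}^{+}$ is irreducible for all $N>N_{0}$ --- so the non-reciprocal part contributes only to the finite exceptional set, not to the arithmetic progressions. All the arithmetic progressions in the final statement come from step~(c), the cyclotomic analysis. Second, the bivariate rewriting $\widetilde{F}(x,y)=x^{\deg c}d(x)+y\,c^{*}(x)$ is correct but superfluous: both Theorem~\ref{schinzelthm1_2} and Corollary~\ref{coroll1_3} are already stated directly for $g_{N}$, so you can drop that paragraph without loss.
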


\textcolor{black}{In Section \ref{S6}} the quantities
$N_{p}(f)$, $p$ tending to infinity,
are studied for trinomials, 
following \cite{serre},
as functions of
the coefficients
of $q$-expansions and correlated to Newforms
and modular forms. 
Some basic questions
about the densities 
of $p$s such that $N_{p}(f)$ is congruent
to a fixed integer modulo some integers
are asked in Serre's general context
\cite{serreVID1} \cite{serreVID2}.

\textcolor{black}{
To outline the novel strategy of the paper,
this note initiates the study of
the quantities
$N_{p}(f)$, $p$ any prime number,
and also $p$ tending to infinity,
for any $f$ in the class $\mathcal{B}$,
and concomittantly the factorization of such
$f$s. This difficult problem is
tackled by the simplest cases of $f$s in
$\mathcal{B}$, which are trinomials,
as in Section \ref{S6}; and by
showing on some subfamilies of $\mathcal{B}$, 
as in Section \ref{S4}, that
factorization occurs with
the absence of reciprocal non-cyclotomic 
components, which is 
conjectured to be the general rule.}

\section{Dynamical zeta function of the $\beta$-shift and Lehmer's problem}
\label{S2}

Let us recall standard definitions.
A complex number $\alpha$ is an algebraic
integer if there exists a monic polynomial
$R(X) \in \zb[X]$ such that
$R(\alpha)=0$. If $R$ is the minimal
polynomial of $\alpha$ and is reciprocal, i.e.
satisfies 
$X^{\deg R} R(1/X)= R(X)$,
then $\alpha$ is \textcolor{black}{called} reciprocal.
If $\alpha$ is reciprocal, $\alpha$ and
$1/\alpha$ are conjugated.
If the minimal polynomial of $\alpha$
is not reciprocal, $\alpha$ is 
called non-reciprocal.
If $\alpha = 1$, or if
$\alpha > 1$
and the conjugates $\alpha^{(i)} \neq \alpha$ of
$\alpha$ satisfy:
$|\alpha^{(i)}| < \alpha$, then
$\alpha$ is said to be a Perron number.

Let $n \geq 3$ be a fixed integer.
Selmer \cite{selmer} has shown that 
the trinomials 
$-1+x+x^n \in \zb[x]$ are irreducible
if $n \not\equiv 5 ~({\rm mod}~ 6)$, and, for
$n \equiv 5 ~({\rm mod}~ 6)$, are reducible as 
product of two irreducible factors whose
one is the cyclotomic factor $x^2 -x +1$,
the other factor 
$(-1 + x + x^n)/(x^2 - x +1)$
being nonreciprocal of degree
$n-2$.
We denote by $\theta_n$ the unique zero
in $(0,1)$ of the trinomial
$-1+x+x^n$. 
The inverses $\theta_{n}^{-1} > 1$ 
are non-reciprocal algebraic integers
which are Perron numbers,
and constitute a decreasing sequence
$(\theta_{n}^{-1})_{n \geq 3}$ 
tending to $1^+$. 
The smallest Mahler measure known
is Lehmer's number $\tau$, 
root $> 1$ of 
$x^{10}+x^{9}-x^7-x^6-x^5-x^4-x^3 +x+1$,
such that:
$\theta_{12}^{-1} < \tau = 1.176280\ldots
< \theta_{11}^{-1}$.  
Here $n$ is equal to $12$.
The 
\textcolor{black}
{search for} reciprocal algebraic integers 
$\beta$s,
of Mahler measure
M$(\beta) \leq 1.176280$,
in the intervals
$(\theta_{n}^{-1} , 
\theta_{n-1}^{-1})$, $n \geq 13$,
having a minimal polynomial for which there is
no $\zb$-minimal integer polynomial  
$\widehat{P_{\beta}}(X)$
such that
\begin{equation}
\label{requalityPP}
P_{\beta}(X) = \widehat{P_{\beta}}(X^r)
\end{equation}
for some integer $r \geq 2$ 
is of importance
in the problem of the non-trivial 
minoration of the Mahler measure 
(Section 2 in \cite{vergergaugryP}).
The existence of very small Mahler measures
is still a mystery.

Let us assume the 
existence of such a reciprocal  
algebraic integer
$\beta > 1$. 
It is canonically associated with, and
characterized by,
two analytic functions:
\begin{enumerate}
\item[(i)]
its minimal polynomial function, say
$z \to P_{\beta}(z)$, which is monic and
reciprocal; 
denote $d := \deg P_{\beta}$,
$H:=$ the (na\"ive) height of $P_{\beta}$,
\item[(ii)] the Parry Upper function 
$f_{\beta}(x)$ at
$\beta^{-1}$, which is the
generalized Fredholm
determinant of the $\beta$-transformation $T_{\beta}$
(Section 3 in \cite{vergergaugryv8}).
It
is a (infinite) power series
with coefficients in the alphabet $\{0,1\}$
except the constant term equal to $-1$,
with distanciation between the exponents of the monomials:
$$f_{\beta}(x) := -1 + x + x^n +
x^{m_1} + x^{m_2} + \ldots + x^{m_s} +
\ldots$$
where $m_1 - n \geq n-1$, 
$m_{q+1}-m_q \geq n-1$ for $q \geq 1$.
$\beta^{-1}$ is the unique 
zero of $f_{\beta}(x)$
in the unit interval $(0,1)$.
The analytic function
$f_{\beta}(z)$ is related to the dynamical
zeta function $\zeta_{\beta}(z)$ of the
$\beta$-shift 
(Section 3 in \cite{vergergaugryv8};
\cite{flattolagariaspoonen}) by:
$f_{\beta}(z) = -1/\zeta_{\beta}(z)$.
Since $\beta$ is 
reciprocal, with the two real roots
$\beta$ and $1/\beta$, the series
$f_{\beta}(x)$ is never a polynomial,
by Descartes's rule on sign changes on the 
coefficient vector.
The algebraic integer $\beta$ is 
associated with the infinite
sequence of exponents $(m_j)$.  
\end{enumerate}

Let us observe that 
all the polynomial sections 
$$S_{s}(x)
:=-1 + x + x^n +
x^{m_1} + x^{m_2} + \ldots + x^{m_s}, \qquad
s \geq 1,$$
of $f_{\beta}(x)$
are polynomials of the class
$\mathcal{B}_n$.

The principal motivation
to study the class $\mathcal{B}$ for itself in the present note
comes
from the peculiar form of these polynomial sections.

The polynomials $f$ of the class
$\mathcal{B}$ are
often
irreducible by the following conjecture, 
formulated
in \cite{dutykhvergergaugry18}.
 
\vspace{0.3cm}

\noindent
{\it {\bf Asymptotic Reducibility Conjecture (ARC)}}
~
{\it 
Let $n \geq 2$ and
$N \geq n$. Let
$\mathcal{B}_{n}^{(N)}$ denote
the set of the polynomials
$f \in \mathcal{B}_{n}$ 
such that $\deg(f) \leq N$.
Let
$\mathcal{B}^{(N)} 
:= \bigcup_{2 \leq n \leq N} 
\mathcal{B}_{n}^{(N)}$.
The proportion of
polynomials in
$\mathcal{B} =
\cup_{N \geq 2} \mathcal{B}^{(N)}$
which are irreducible is given by the limit, assumed to exist,
\begin{equation}
\label{ARCconjecture}
\lim_{N \to \infty} 
\frac{\#\{ f \in \mathcal{B}^{(N)} \mid f~ \mbox{irreducible} \}}
{\#\{ f \in \mathcal{B}^{(N)}\}}
\qquad \mbox{and its value is expected to be}
~~\frac{3}{4}.
\end{equation}
}

Let us recall the generic factorization of 
the polynomials
$f \in \mathcal{B}$ 
\cite{dutykhvergergaugry18} which 
generalizes that of the
trinomials $-1 +x+x^n$ by Selmer \cite{selmer}.

\begin{theorem}[Dutykh Verger-Gaugry]
\label{thm1factorization}
For any $f \in \mathcal{B}_n$, $n \geq 3$,
denote by
$$f(x) = A(x) B(x) C(x) =
-1 + x + x^n +
x^{m_1} + x^{m_2} + \ldots + x^{m_s},$$
where $s \geq 1$, $m_1 - n \geq n-1$, 
$m_{j+1}-m_j \geq n-1$ for $1 \leq j < s$,
the factorization of  $f$ where $A$ is 
the cyclotomic part, 
$B$ the reciprocal noncyclotomic part,
$C$ the nonreciprocal part.
Then (i)
the  nonreciprocal part $C$ is
nontrivial, irreducible, 
and never vanishes on the unit circle,
(ii) if $\beta > 1$
denotes the real algebraic integer
uniquely determined 
by the sequence
$(n, m_1 , m_2 , \ldots , m_s)$ such that
$1/\beta$ is the unique
real root of $f$ in
$(\theta_{n-1} , \theta_{n})$,
the nonreciprocal polynomial
$-C^*(X)$ of
$C(X)$
is the minimal polynomial 
of $\beta$, and $\beta$ is a nonreciprocal
algebraic integer.
\end{theorem}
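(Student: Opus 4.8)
First I would pin down the real structure of $f$. The decomposition $f=ABC$ into cyclotomic part $A$, reciprocal non-cyclotomic part $B$ and non-reciprocal part $C$ is a matter of definition (all three monic, $C$ being the product of the non-self-reciprocal irreducible factors counted with multiplicity), so $AB$ is monic reciprocal with $AB(0)=1$; since $f$ is monic with $f(0)=-1$, $C$ cannot be constant. By Descartes' rule the coefficient sign pattern $(-,+,+,\dots,+)$ forces $f$ to have exactly one positive real zero, and it lies in $(0,1)$ because every non-constant monomial of $f$ increases on $[1,\infty)$ while $f(1)=s+1>0$; denote it $1/\beta$, so $\beta>1$. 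By the rational root theorem $1/\beta$ is irrational, and $f'(x)=1+nx^{n-1}+\sum_{q=1}^{s}m_q x^{m_q-1}$ has positive coefficients, so $f'(1/\beta)>0$: the zero $1/\beta$ is simple and $f$ has no real zero $\ge 1$. For the localisation, $f(\theta_n)=G_n(\theta_n)+\sum_q\theta_n^{m_q}>0$, while using $G_{n-1}(\theta_{n-1})=0$, i.e. $\theta_{n-1}^{n-1}=1-\theta_{n-1}$, one gets $f(\theta_{n-1})=-\theta_{n-1}^{2n-2}+\sum_q\theta_{n-1}^{m_q}$; the gap hypotheses give $m_q\ge(2n-1)+(q-1)(n-1)$, hence $\sum_q\theta_{n-1}^{m_q}<\theta_{n-1}^{2n-1}\sum_{j\ge0}\theta_{n-1}^{j(n-1)}=\theta_{n-1}^{2n-2}$ (strictly, as $s$ is finite), so $f(\theta_{n-1})<0$ and $1/\beta\in(\theta_{n-1},\theta_n)$.

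\textbf{Step 2 (minimal modulus and the integer $\beta$).} A term-by-term comparison shows that for $0<|z|<1/\beta$ one has $|z|+|z|^n+\sum_q|z|^{m_q}<(1/\beta)+(1/\beta)^n+\sum_q(1/\beta)^{m_q}=1$, so $|f(z)|\ge 1-(\cdots)>0$; together with $|f(0)|=1$ and the equality analysis on $|z|=1/\beta$ (equality forces $z,z^n,z^{m_q}$ collinear, then $z$ real positive), this shows $1/\beta$ is the unique zero of $f$ of smallest modulus. Let $P$ be its minimal polynomial; then $P\mid f$ and $P(0)\mid f(0)=-1$, so the monic representative of $\pm P^*$ (with $g^*(x):=x^{\deg g}g(1/x)$) has $\beta$ as a root: $\beta$ is an algebraic integer, in fact a Perron number since its conjugates are the inverses of the conjugates of $1/\beta$. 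If $P$ were reciprocal its zero set would be stable under $z\mapsto 1/z$, so $\beta$ would be a conjugate of $1/\beta$ and hence a positive real zero of $f$ exceeding $1$, which Step 1 excludes; thus $P$ is non-reciprocal, $P\mid C$, $\deg C\ge\deg P\ge 2$, and since $A(0)=B(0)=1$ we have $C(0)=-1$, so once $C=P$ is known, $-C^*$ is exactly the monic minimal polynomial of $\beta$.

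\textbf{Step 3 (reduction of (i)).} Because $f'(1/\beta)\ne 0$, $P$ divides $f$ with multiplicity one, so $C$ equals $P$ as soon as one proves the single statement
\[
(\star)\qquad P\text{ is the only non-reciprocal irreducible factor of }f.
\]
Granting $(\star)$: $C=P$ is non-trivial and irreducible, $-C^*$ is the minimal polynomial of the non-reciprocal algebraic integer $\beta$, so (ii) holds; and $C$ has no zero on $|z|=1$. For the last point, if $|\rho|=1$ and $C(\rho)=0$, then since $f$ has real coefficients $\overline{f(\rho)}=f(1/\rho)$, hence $f^*(\rho)=\rho^{m_s}\overline{f(\rho)}=0$; as $P$ is the minimal polynomial of $\rho$ (a conjugate of $1/\beta$), $P\mid f$ and $P\mid f^*$, so $P^*\mid f^{**}=f$, and $P^*\ne\pm P$ would be a second non-reciprocal irreducible factor of $f$, contradicting $(\star)$. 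This completes (i) modulo $(\star)$.

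\textbf{Step 4 (the main obstacle: proving $(\star)$).} This is where the peculiar lacunarity $m_1-n\ge n-1$, $m_{q+1}-m_q\ge n-1$ must be used decisively, and it is the genuinely hard part. If $Q\ne\pm P$ were another non-reciprocal irreducible factor, then $Q$ is coprime to $P$, so $1/\beta$ is not a zero of $Q$ and (by Step 2) every zero of $Q$ has modulus $>1/\beta$, while $Q^*\mid f^*$. I would attempt this in one of two ways. First, by induction on $s$: the base case $s=0$ is Selmer's irreducibility theorem for $-1+x+x^n$ \cite{selmer}; for the step one deletes the top monomial $x^{m_s}$ (leaving an element of $\mathcal B_n$), invokes the inductive hypothesis, and reintroduces $x^{m_s}$, using Rouché/Hurwitz continuity of the zeros together with the gap $m_s-m_{s-1}\ge n-1$ to keep zeros off the critical circle $|z|=1/\beta$, so that the non-reciprocal part cannot acquire a new irreducible factor. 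Alternatively, one applies Schinzel's theorem on factorisations of lacunary polynomials (Theorem \ref{schinzelthm1_2}) together with the Sawin--Shusterman--Stoll large-gap bound \cite{sawinshustermanstoll} to force every non-trivial factor of $f$, apart from the single copy of $P$, to be reciprocal. Either route reduces the whole theorem to controlling how the non-reciprocal part of a trinomial propagates when the lacunary monomials $x^{m_1},\dots,x^{m_s}$ are inserted --- precisely the new phenomenon, beyond Selmer's case, that the argument must master, and the step I expect to be the real difficulty.
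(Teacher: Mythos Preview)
Your Steps 1--3 are sound and set up the problem correctly: the reduction to $(\star)$ is the right move, and the argument that $C$ has no zero on the unit circle once $(\star)$ is known is clean. The gap is entirely in Step~4, and neither of the two routes you sketch can close it. The Rouch\'e/Hurwitz idea is a continuity argument about the \emph{location} of zeroes, but irreducibility of the non-reciprocal part is an algebraic, not topological, property: as you slide the coefficient of $x^{m_s}$ from $0$ to $1$, nothing prevents a single non-reciprocal irreducible factor from splitting into two non-reciprocal irreducible factors even if no zero crosses any fixed circle; ``keeping zeros off $|z|=1/\beta$'' simply does not control factorization. The Schinzel/Sawin--Shusterman--Stoll route fails for a reason the paper itself makes explicit: Theorem~\ref{schinzelthm1_2} and Corollary~\ref{coroll1_3} apply only when one gap exceeds an explicit bound ($N_0$, $N_1$, $N_4$, \ldots), and all of Section~\ref{S4} is devoted to explaining that these bounds are genuinely restrictive. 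For arbitrary $f\in\mathcal B_n$ the gaps need only satisfy $m_{q+1}-m_q\ge n-1$, far below $N_4$, so invoking \cite{sawinshustermanstoll} would at best prove the theorem on a thin subclass of~$\mathcal B$.

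The proof in \cite{dutykhvergergaugry18} (the present paper only quotes the statement) goes by a completely different, purely algebraic device---``Ljunggren's tricks'', as the remark after Theorem~\ref{schinzelthm1_2} says. The key identity is that for any factorization $f=gh$ in $\zb[x]$ one has $\|g^{*}h\|=\|gh\|$, because $(gh)(gh)^{*}=gg^{*}hh^{*}=(g^{*}h)(g^{*}h)^{*}$ and the middle coefficient of $FF^{*}$ is always $\|F\|^{2}$. If the non-reciprocal part $C$ were reducible, take $g$ to be one non-reciprocal irreducible factor and form $\tilde f:=g^{*}\cdot(f/g)$; then $\tilde f\ne\pm f,\pm f^{*}$, yet $\|\tilde f\|^{2}=\|f\|^{2}=s+3$, forcing $\tilde f$ to have all coefficients in $\{-1,0,1\}$ with exactly $s+3$ nonzero terms. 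A short comparison of the low- and high-order coefficients of $\tilde f$ with those of $f$ and $f^{*}$, using only the spacing $m_{q+1}-m_q\ge n-1$, then yields a contradiction. This norm-preservation idea is what is missing from your Step~4.
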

From numerous experiments on the class
$\mathcal{B}$ by Monte-Carlo
in \cite{dutykhvergergaugry18}, 
the components ``B" were never observed.
The following conjecture is reasonable 
to formulate. It will be partially proved
in Section \ref{S4}.
\vspace{0.2cm}

\noindent
{\it {\bf Conjecture B}}~
{\it The reciprocal non-cyclotomic part B 
of any $f \in \mathcal{B}_n$,
$n \geq 3$, is always trivial.}
\vspace{0.2cm}

\noindent

Let us mention now the link
with Lehmer's problem. Lehmer's number
$1.176280$ is the smallest Mahler 
measure known. By a theorem of Smyth 
\cite{smyth}
the Mahler measure of a nonzero 
algebraic integer
which is not reciprocal, not a root of unity,
is $\geq 1.3247...$, dominant root
of $X^3 -X-1$ and smallest Pisot number.
Then the Mahler measures of algebraic integers
which are in the range $(1, 1.3247)$
arise from
reciprocal algebraic integers which are not roots of unity.
This is 
\textcolor{black}{the main reason} 
why Conjecture B is important to investigate, about the possible
existence
of reciprocal parts in the factors
of the polynomials of the class $\mathcal{B}$.

\textcolor{black}{Our attention is focused}
on the 
\textcolor{black}
{search for} 
hypothetical
reciprocal algebraic integers $\beta > 1$
for which M$(\beta) \in (1, 1.176280)$
and \eqref{requalityPP} is satisfied, 
\textcolor{black}{that is when $n$ is large in 
Conjecture B.  }
Using intermediate alphabets and 
periodic representations of 
$\qb(\beta)$
in the algebraic basis $\beta$, 
it was shown in
\cite{dutykhvergergaugry21} that
the relation between 
$f_{\beta}$ and
$P_{\beta}$ is a relation of identification
on the subcollection of lenticular
zeroes of $f_{\beta}$. 
\textcolor{black}{
The definition
of a lenticular zero is
given in \cite{dutykhvergergaugry18},
where many examples are proposed.}
\vspace{1cm}

\textcolor{black}{
The lenticular zeroes of
$f_{\beta}$ are peculiar zeroes, off the unit circle.
Let us briefly recall what is a lenticular zero
of $f_{\beta}$. Many examples of
lenticular zeroes are given in
\cite{dutykhvergergaugry18}. 
The following theorem is Theorem 4 in
 \cite{dutykhvergergaugry18}. 
}
\textcolor{black}{
\begin{theorem}
\label{thm2lenticuli}
Assume $n \geq 260$.
There exist
two positive constants $c_n$ and $c_{A,n}$ , 
$c_{A,n} < c_n$,
such that the
roots of any
$f \in \mathcal{B}_n$,
$$f(x)=
-1 + x + x^n +
x^{m_1} + x^{m_2} + \ldots + x^{m_s},$$
where $s \geq 1$, $m_1 - n \geq n-1$, 
$m_{j+1}-m_j \geq n-1$ for $1 \leq j < s$,
lying in
$- \pi/18 < \arg z < + \pi/18$
either belong to
$$\{z \in \cb : ||z|-1| < \frac{c_{A,n}}{n}\},\quad
\mbox{or to}\quad
\{z \in \cb : ||z|-1| \geq \frac{c_n}{n}\}.$$
\end{theorem}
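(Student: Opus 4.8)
The plan is to treat each $f \in \mathcal B_n$ as a lacunary perturbation of the trinomial $G_n(z) = -1 + z + z^n$: write $f(z) = G_n(z) + R(z)$ with $R(z) := z^{m_1} + z^{m_2} + \dots + z^{m_s}$ (so $R \equiv 0$ and $f = G_n$ when $s = 0$). The separation hypotheses $m_1 - n \ge n-1$ and $m_{q+1} - m_q \ge n-1$ are exactly what is needed to control $R$: on $|z| = r < 1$ they give
\[
|R(z)| \ \le\ \sum_{j=1}^{s} r^{m_j} \ \le\ \frac{r^{m_1}}{1 - r^{n-1}} \ \le\ \frac{r^{2n-1}}{1 - r^{n-1}},
\]
which on a circle $r = 1 - u/n$ with $u$ of moderate size is of order $e^{-2u}/(1 - e^{-u})$ — genuinely small once $u$ is of order $\log n$. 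So the heart of the matter is the trinomial.

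First I would localize the zeros of $G_n$ near the unit circle. Let $\theta_n \in (0,1)$ be its real root and set $w_0 = w_0(n) := n(1 - \theta_n)$; then $w_0 e^{w_0} = n\,(1 + o(1))$, so $w_0 \sim \log n$. The elementary observation is that $G_n(z) = 0$ forces $|z|^n = |1 - z| \ge \big|\,1 - |z|\,\big|$; with $u := 1 - |z| > 0$ this reads $(1-u)^n \ge u$, i.e. $u \le 1 - \theta_n = w_0/n$, so every zero of $G_n$ inside the unit disc lies within $w_0/n$ of the circle (and, by $|1-z| \le 1 + |z|$, the zeros with $|z| > 1$ lie within $O(1/n)$ of it). Hence, for $z = re^{i\theta}$ with $1 - r = u > w_0/n$, one has $|z^n| = (1-u)^n < w_0/n < u \le |z-1|$, so
\[
|G_n(z)| \ \ge\ |z-1| - |z^n| \ \ge\ u - (1-u)^n ,
\]
an expression strictly increasing in $u$ past $w_0/n$. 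I would then fix $c_{A,n} := 2 w_0$ — a quantity of order $\log n$, taken generously large so as to absorb the error terms below and to make the (perturbed) lenticulus of $G_n$ and the real root $\beta^{-1}$ of $f$, all at depth $\le w_0/n\,(1 + o(1))$, sit strictly inside $\{\,||z|-1| < c_{A,n}/n\,\}$ — and let $c_n$ be any fixed quantity with $c_n > c_{A,n}$.

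The transfer to $f$ is then a Rouché-type comparison on the annular sector $\Delta_n := \{\, z : c_{A,n}/n \le ||z|-1| \le c_n/n,\ |\arg z| \le \pi/18 \,\}$. On its part with $|z| < 1$ one has $u \ge 2 w_0/n$, so $(1-u)^n \le e^{-2 w_0} = O\big((\log n / n)^2\big)$ and $1 - r^{n-1} \ge \tfrac12$, whence $|R(z)| = O\big((\log n/n)^4\big)$, while $|G_n(z)| \ge u - (1-u)^n \ge w_0/n$; so $|R(z)| < |G_n(z)|$ and $f(z) = G_n(z) + R(z) \ne 0$ there (and Rouché along the boundary of $\{||z|-1| < c_{A,n}/n\} \cap \{|\arg z| < \pi/18\}$ even shows $f$ and $G_n$ have equally many zeros inside — the lenticulus of $f$). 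On the part of the sector with $|z| = r \ge 1 + c_0/n$, $c_0$ a small absolute constant, the leading monomial dominates: bounding $z^{m_{s-1}}, \dots, z^{m_1}, z^n, z, -1$ by a geometric series in $r^{-(n-1)}$ gives $|f(z) - z^{m_s}| \le 4\, r^{m_s - (n-1)}/(1 - r^{-(n-1)}) < r^{m_s}$ as soon as $r^{n-1} > 5$, i.e. for $r > 5^{1/(n-1)} = 1 + O(1/n)$; thus $f$ has no zero with $|z| > 1 + c_0/n$ at all, and since $c_0 < w_0 < c_{A,n}$ for $n \ge 260$, every zero of $f$ with $|z| \ge 1$ lies in $\{||z|-1| < c_{A,n}/n\}$. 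Combining: a zero $z$ of $f$ with $|\arg z| < \pi/18$ lies neither in $\Delta_n$ nor in $\{|z| > 1\} \cap \{||z|-1| \ge c_{A,n}/n\}$, hence $||z|-1| < c_{A,n}/n$ or $||z|-1| \ge c_n/n$, which is the assertion.

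I expect the main obstacle to be the lower bound for $|G_n|$ in the radial direction $\theta \approx 0$, where $|z-1|$ and $|z^n|$ are both of order $\log n / n$ and phase cancellation between $z-1$ and $z^n$ is a priori possible; the inequality $|1-z| \ge 1 - |z|$ disposes of it cheaply — it forces $|z^n| < 1 - \theta_n < |z-1|$ once $1 - |z| > 1 - \theta_n$, so no cancellation occurs past the real root — but turning this into an explicitly admissible $c_{A,n}$ (large enough to absorb the errors in $(1 - w/n)^n = e^{-w}\big(1 + O(w^2/n)\big)$ for $|w|$ of order $\log n$, to keep $w_0/n > |R(z)|$ on $\Delta_n$, and to dominate the exterior constant $c_0$, the last needing roughly $n > c_0 e^{c_0}$) is precisely what forces the threshold $n \ge 260$. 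The angular aperture $\pi/18$ plays only an auxiliary role here — the lower bound on $|G_n|$ above holds for every argument — and is retained for consistency with \cite{dutykhvergergaugry18}, where the explicit values of $c_{A,n}, c_n$ are computed and where the aperture governs the ``lens'' geometry and keeps the estimates away from the unit-circle zero of $G_n$ at $\arg z = \pm \pi/3$ (present precisely when $n \equiv 5 \bmod 6$).
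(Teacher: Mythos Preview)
The paper does not prove this theorem; it quotes it verbatim as Theorem~4 of \cite{dutykhvergergaugry18} and then immediately \emph{uses} the constants $c_{A,n}<c_n$ to define the lenticulus $\mathcal L_\beta=\{\omega: f(\omega)=0,\ |\omega|<1,\ |\arg\omega|<\pi/18,\ ||\omega|-1|\ge c_n/n\}$, with the explicit assertion that the real root $1/\beta$ belongs to $\mathcal L_\beta$.

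Your argument does establish the literal existence statement, but with a degenerate choice of constants that empties the theorem of its content. You take $c_{A,n}=2w_0\sim 2\log n$ and then show (correctly) that \emph{every} zero of $f$ satisfies $||z|-1|<c_{A,n}/n$; the alternative $||z|-1|\ge c_n/n$ is never realized, for any $c_n>c_{A,n}$. But the very next line of the paper needs the opposite: $1/\beta$, which sits at depth $1-1/\beta\in(1-\theta_n,1-\theta_{n-1})\sim w_0/n$, must fall into the \emph{second} region $||z|-1|\ge c_n/n$. Your constants force $1/\beta$ (and the whole lens of complex zeros near it) into the first region instead, so $\mathcal L_\beta=\varnothing$ — directly contradicting the definition that follows. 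You noticed this yourself when you wrote that you want ``the (perturbed) lenticulus of $G_n$ and the real root $\beta^{-1}$ \ldots\ sit strictly inside $\{||z|-1|<c_{A,n}/n\}$''; that is precisely backwards relative to the paper's use.

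The substantive claim the theorem is meant to carry is a \emph{genuine} annular gap: the lenticular zeros lie at depth $\asymp(\log n)/n$, while all other zeros in the sector are strictly closer to the unit circle, and there is a forbidden band between. Your lower bound $|G_n(z)|\ge u-(1-u)^n$ cannot detect this, because it vanishes exactly at $u=w_0/n$ (indeed $(1-w_0/n)^n=\theta_n^{\,n}=1-\theta_n=w_0/n$), i.e.\ right at the lenticular depth where the separation has to be proved. What is actually needed — and is carried out in \cite{dutykhvergergaugry18} — is a finer localisation of the individual zeros of $G_n$ in the sector (the real root and a finite arc of nearby complex conjugate pairs, all at depth comparable to $w_0/n$) together with a proof that the remaining zeros of $G_n$ in that sector are much closer to $|z|=1$, followed by a Rouch\'e transfer to $f$ that preserves both populations and the gap between them. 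Your perturbation estimate $|R(z)|\le r^{2n-1}/(1-r^{n-1})$ is the right ingredient for that transfer, but by itself it does not produce the gap.
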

}
\textcolor{black}{
The {\em lenticulus of zeroes $\omega$ 
of $f$} is then defined
as
$$\mathcal{L}_{\beta} :=
\{\omega \in \cb : f(\omega) =0, |\omega| < 1,
-\frac{\pi}{18} < \arg \omega < +\frac{\pi}{18},~
||\omega|-1| \geq \frac{c_n}{n}\}$$
where $1/\beta \in \mathcal{L}_{\beta}$ 
is the positive real
zero of $f$. If a zero of 
$f$ belongs to $\mathcal{L}_{\beta}$ 
we say that it is a 
{\em lenticular zero of $f$}.
}

\vspace{1cm}

More precisely
if $\Omega$ is a lenticular zero, then
\textcolor{black}{$$f_{\beta}(\Omega) = 0
\Longrightarrow
P_{\beta}(\Omega)=0.$$
}

\textcolor{black}{
\section{An analog of Uniform Distribution Theorems - Proof of Theorem \ref{mainN}}
\label{S3}
}
\textcolor{black}{
Recall \cite{kuipersniederreiter}
\cite{strauch} that we say a sequence 
$(x_n)_{n\geq 1} \subseteq [0,1)^s$ is 
\it uniformly distributed  on $[0,1)^s$ \rm 
if for each box $\mathcal{B} \subseteq [0,1)^s$ 
which is a cartesian product of intervals contained 
in $[0,1)$,  of volume $\mid \!\!\mathcal{B}\!\mid$ we have
$$
\lim _{N \to \infty}{1\over N}\# \{ 1\leq n \leq N: x_n \in 
\mathcal{B} \} = \,\mid \!\!\mathcal{B}\!\mid.
$$
Equivalently, for continuous $f:[0,1)^s \to \Bbb{C}$ we have 
\begin{equation}
\label{unifdistribeq}
\lim _{N \to \infty}{1\over N}\sum _{n=1}^Nf(x_n)  
= \int _{[0,1)^s}f(t)dt.
\end{equation}
}
\textcolor{black}{
In the present context,
the left-hand side of
\eqref{kroneckeraverage} is the analog of
the left-hand side of \eqref{unifdistribeq}.
The right-hand side of 
\eqref{kroneckeraverage} is deduced from
Kronecker's Average Value Theorem. 
This Theorem was announced by Kronecker
at the Academy of Sciences, Berlin, 
in 1880, without proof. 
It has been 
given a proof by Rosen in
\cite{rosen}.
}

\

\section{Factorization, large gaps and Conjecture ``B"}
\label{S4}

The factorization of integer lacunary 
polynomials
has received a lot of attention, eg
Prasolov's book \cite{prasolov},
Schinzel \cite{schinzel67}
\cite{schinzel69} 
\cite{schinzel78}
\cite{schinzel00},
or
Filaseta 
and his collaborators
\cite{dobrowolskifilasetavincent}
\cite{filaseta}
\cite{filasetafinchnicol}
\cite{filasetamatthews}.
Finding irreducibilty criteria
is an important 
\textcolor{black}{topic}, 
and many problems remain open.

Consider polynomials of the form
\begin{equation}
\label{gNdefinition}
g_{N}(x) = x^N c(x^{-1}) + d(x),
\end{equation}
where $c$ and $d$ are fixed polynomials 
in $\zb[x]$
with $c(0), d(0)\neq 0$.
We are interested in the 
irreductibility
of $g_N$ for large $N$.
Such polynomials have already
appeared in different contexts,
eg \cite{dobrowolskifilasetavincent}
\cite{harringtonvincentwhite}
\cite{filasetamatthews}
\cite{filasetafordkonyagin}
\cite{schinzel67}.
From Schinzel \cite{schinzel00}
\cite{schinzel69}
and following
Sawin, Shusterman and Stoll
\cite{sawinshustermanstoll}
we first recall the general statements of
Theorem \ref{schinzelthm1_2}
and Corollary \ref{coroll1_3},
\textcolor{black}{concerned by} the factorization
of $g_N$ for $N$ large.

The step after, 
which is important
to understand the factorization of 
any $f \in \mathcal{B}$ containing large gaps, 
is to recognize such $f$ as a $g_N$,
and
apply these general theorems to $f$.
Of course only the $f \in \mathcal{B}$
having large gaps are concerned
\textcolor{black}{by} 
these statements. \textcolor{black}{
It is expected that the polynomials
$f \in \mathcal{B}$ 
which possess a small 
gappiness, though
not concerned by these statements,
have the same 
factorization properties.}

What is the main conclusion at large gaps?
Corollary \ref{coroll1_3}
implies that any $f \in \mathcal{B}$
would have an irreducible non-cyclotomic
component. 
But, in view of 
Theorem \ref{thm1factorization}, 
since the non-cyclotomic
part of  
$f(x)$ is $B(x)C(x)$ and that
$C(x)$ always
exist and is irreducible,
 it means that
the reciprocal non-cyclotomic
component $B(x)$ is trivial.  
So to say, Conjecture B is true 
at large gaps on $\mathcal{B}$.

Proving Conjecture B on all
$f \in \mathcal{B}$
amounts to \textcolor{black}{checking} Conjecture B
when the gaps in $f$ do not obey
the conditions of
Theorem \ref{schinzelthm1_2}
and Corollary \ref{coroll1_3}, 
that is at small gappiness.
Examples of pentanomials
with moderate gappiness
are given in Table \ref{table1}. 

Let us now state
Theorem \ref{schinzelthm1_2}
and Corollary \ref{coroll1_3},
and make precise the critical bounds
$N_1 , N_2 , N_3 , N_4$.

For a polynomial $u = \sum_{i=0}^{r}
a_i x^i \in \zb[x]$, we define 
$\|u\|$ as the squared Euclidean length  of its coefficient vector:
$$\|u\| := \sum_{i=0}^{r} |a_i|^2 ,$$
and $u^*$ denotes its reciprocal polynomial, as
$$u^{*}(x) = x^{\deg u} u(1/x) =
\sum_{i=0}^{r}
a_{r-i} x^i
.$$
We say that $u$ is reciprocal when
$u^* = u$. It is said to be non-reciprocal
if $u^* \neq u$.

From \eqref{gNdefinition} the
two integers $N_1$ and $N_2$ can be defined:
let $T:= \max\{\deg c , \deg d\}$,
and $\tau$ be the smallest Salem number known,
$1.176280$, unique
real positive root $> 1$ of
$x^{10}+x^9 - x^7 -x^6 - x^5 -x^4 -x^3 +x + 1$
(Lehmer's polynomial).
Denote
\begin{equation}
\label{N1index}
N_1 := \deg c + \deg d +
\left\{
\begin{array}{ll}
\frac{2 T}{\log \tau} \log (\|c\| + \|d\|)
& \mbox{if}~ T \leq 27,
\\
T (\log (6 T))^3 \log (\|c\| + \|d\|)
& \mbox{otherwise};
\end{array}
\right.
\end{equation}
$$
N_2 := \deg c + \exp\bigl(\frac{5}{16} \cdot
2^{(\|c\|+\|d\|)^2}\bigr)
\left(
2 + \max\{2, (\deg c)^2,
(\deg d)^2\}
\right)^{\|c\|+\|d\|}
.$$

\begin{definition}
A pair $(c,d)$ of polynomials
$c, d \in \zb[x]$ 
with 
$c(0), d(0) \neq 0$
is Capellian when $-d(x)/c(x^{-1})$ is a $p$th
power in $\qb(x)$ for some prime $p$
or
$d(x)/c(x^{-1})$ is 4 times
a fourth power in $\qb(x)$.
\end{definition}

The following Theorem 
\ref{schinzelthm1_2}
is Theorem 1.2 in
\cite{sawinshustermanstoll}, a revisited 
formulation of a theorem of Schinzel
(Theorem 74 in \cite{schinzel00});
the upper bound $N_2$ in 
item (ii) can be found in \cite{schinzel69}.

\begin{theorem}[Schinzel]
\label{schinzelthm1_2}
Let $c, d \in \zb[x]$
with $c(0), d(0) \neq 0$.
Assume that $c \neq \pm d$,
that
\textcolor{black}
{$\gcd_{\zb[x]} 
(c^*,d) =1$} 
and 
$(c,d)$ is not Capellian.
Then \begin{enumerate}
\item[(i)] there is a bound $N_0$ depending only 
on $c$ and $d$ such that for
$N > N_0$, the non-reciprocal part of $g_N$ is irreducible,
\item[(ii)] the bound $N_0$ satisfies $N_0 \leq N_2$.
\end{enumerate}
\end{theorem}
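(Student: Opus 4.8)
The plan is to write $g_{N}(x)=x^{N}c(x^{-1})+d(x)$, to regard it as a one-parameter family of lacunary polynomials — at most $\deg c+\deg d+2$ nonzero terms, with coefficients those of $c$ and $d$, so that $\|g_{N}\|=\|c\|+\|d\|$ once $N>\deg c+\deg d$, yet $\deg g_{N}=N\to\infty$ — and to control its \emph{non-reciprocal part} $h_{N}$, i.e.\ the product of its non-reciprocal, non-cyclotomic irreducible factors. By Landau's inequality ${\rm M}(g_{N})^{2}\le\|g_{N}\|=\|c\|+\|d\|$, so the Mahler measure of \emph{any} factor of $g_{N}$ is bounded independently of $N$; since a non-reciprocal, non-cyclotomic irreducible polynomial has Mahler measure at least $1.3247\ldots$ by Smyth's theorem, the number of irreducible factors of $h_{N}$ is bounded by a constant $J=J(c,d)$, uniformly in $N$. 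The theorem then asks: for $N$ large, this number is at most $1$.

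For the structural part I would use that a nonzero root $\xi$ of $g_{N}$ satisfies
\[
\xi^{\,N}=-\,\frac{d(\xi)}{c(\xi^{-1})},
\]
so $h_{N}$ records the extraction of an $N$-th root of the fixed rational function $-d(x)/c(x^{-1})$ along $\qb(\xi)$. By a Capelli–Kneser type criterion, such a root extraction can force a splitting precisely in the situations that make the pair $(c,d)$ \emph{Capellian} — $-d(x)/c(x^{-1})$ a $p$-th power in $\qb(x)$ for some prime $p$, or $d(x)/c(x^{-1})$ equal to $4$ times a fourth power in $\qb(x)$ — which the hypothesis excludes; and it must be excluded, since if $(c,d)$ were Capellian then $g_{N}$ would split for all $N$ in a fixed arithmetic progression. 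The remaining hypotheses $c\neq\pm d$ and $\gcd_{\zb[x]}(c^{*},d)=1$ dispose of the other degeneracies: a fixed common factor of $c^{*}$ and $d$ dividing every $g_{N}$, and the forced factorizations $g_{N}=c^{*}(x)\bigl(x^{N-\deg c}\pm1\bigr)$.

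It then remains — and this is the hard part — to bound the \emph{sporadic} factorizations: to show that a non-trivial splitting $h_{N}=uv$ not accounted for by the Capellian structure can occur for only finitely many $N$. This is the quantitative heart of Schinzel's Theorem~74 in \cite{schinzel00}: one enumerates the finitely many shapes a putative factor $u$ can take — degree and height controlled in terms of $\deg c,\deg d,\|c\|,\|d\|$ through the uniform Mahler bound above (this is where the Mahler-measure count, and with it the constant $\tau$ recorded in $N_{1}$, enters) — and for each shape a resultant computation against $g_{N}$ yields an explicit upper bound on the admissible $N$; the largest such bound is an admissible $N_{0}$, and Schinzel's form of the estimate gives $N_{0}\le N_{2}$, which is item~(ii). (That $h_{N}$ is moreover non-trivial for $N$ large, so that ``irreducible'' is not vacuous, is a degree count, cf.\ Corollary~\ref{coroll1_3}.) The revisited account in \cite{sawinshustermanstoll} reassembles the same three ingredients — uniform size control, the Capelli dichotomy, and the cyclotomic/reciprocal bookkeeping — with tighter estimates, and the sharper threshold $N_{1}$ is theirs.
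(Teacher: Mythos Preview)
The paper does not prove this theorem. It is stated as Theorem~1.2 of \cite{sawinshustermanstoll}, which is itself a reformulation of Theorem~74 in Schinzel's book \cite{schinzel00}, with the explicit bound $N_{2}$ in (ii) taken from \cite{schinzel69}. There is therefore no ``paper's own proof'' to compare against; the result is imported as a black box.

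Your outline is a fair high-level summary of the ingredients behind Schinzel's argument --- the uniform Mahler-measure bound via Landau, Smyth's theorem for the number of non-reciprocal factors, and the Capelli obstruction explaining the hypothesis that $(c,d)$ is not Capellian --- and in that sense it goes further than the paper itself. But it is a sketch, not a proof: the step you label ``the hard part'' is exactly the content of the theorem, and you dispose of it by citing Theorem~74 of \cite{schinzel00}, i.e.\ the very result being stated. The phrase ``a resultant computation against $g_{N}$ yields an explicit upper bound on the admissible $N$'' does not accurately describe Schinzel's mechanism either: the actual argument proceeds by writing a hypothetical non-reciprocal factorization of $g_{N}$ as a specialization of a factorization of a two-variable polynomial $J(x,y)=y^{\,r}c(x^{-1})+d(x)$ (or iterated versions thereof), and then invoking finiteness results for such factorizations; the bound $N_{2}$ comes from tracking the height growth through this descent, not from a single resultant.

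One small slip: in your last sentence, the Sawin--Shusterman--Stoll improvement to the bound $N_{0}\le N_{2}$ in item~(ii) is $N_{4}$, not $N_{1}$. The quantity $N_{1}$ plays a different role --- it is the threshold above which no reciprocal non-cyclotomic factor survives, and is used only in Corollary~\ref{coroll1_3} to pass from ``non-reciprocal part irreducible'' to ``non-cyclotomic part irreducible''. Your parenthetical attaching the constant $\tau$ (Lehmer's number) to the Smyth/Mahler count for non-reciprocal factors is likewise misplaced: $\tau$ enters through the conjectural lower bound on Mahler measures of \emph{reciprocal} non-cyclotomic polynomials, which is precisely what controls $N_{1}$, not $N_{0}$.
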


\begin{remark}
Theorem \ref{schinzelthm1_2} 
is probably the best
theorem on the subject in general.
However, in the case of the class 
$\mathcal{B}$, a direct
application of Ljunggren's tricks
provide a better solution, which is
Theorem \ref{thm1factorization},
\textcolor{black}{obtained recently by the authors
in \cite{dutykhvergergaugry18}}.
\end{remark}

Filaseta, Ford and Konyagin 
\cite{filasetafordkonyagin} have shown that the upper bound $N_2$ in (ii) can be replaced by
the better upper bound
$$N_3 :=
\deg c + 2 \max\{5^{4(\|c\|+\|d\|+t)-15},
T (5^{2(\|c\|+\|d\|+t)-8} + 1/4)
\}
$$
where \textcolor{black}{$T= \max\{\deg c , \deg d\}$
and}
$t$ is the number of terms in $c$ plus the
number of terms in $d$.

Under some assumptions, Sawin, Shusterman and Stoll
\cite{sawinshustermanstoll} show
that the upper bound $N_2$ in (ii)
can still be improved by
replacing $N_2$ by
$$N_4:= (1+\deg c + \deg d) 2^{\|c\|+\|d\|}.
$$
The bound $N_4$ is considerably smaller than
$N_3$, and the bound $N_2$ given by Schinzel
is extremely large. For instance,
for the pentanomial 
$f(x) = -1 +x+x^5+x^{14} + x^{100}$, 
the bounds are:
$N_1= 292$
\textcolor{black}{(cf Corollary \ref{coroll1_3} 
for its use)}, 
$N_2 = 1.54 \cdot 10^{4553919} , 
N_3 = 6 \cdot 10^{17}$
whereas $$N_4 = 480.$$
The authors in  \cite{sawinshustermanstoll}
also suggest an algorithm to
improve further the value $N_4$, by replacing
the exponential $2^{\|c\|+\|d\|}$ by 
a polynomial function of
$\|c\|+\|d\|$. This algorithm is useful in some 
cases.

\textcolor{black}{The integer $N_1$ defined in
\eqref{N1index} is introduced in
\cite{sawinshustermanstoll}
and both $N_1$ and
$N_2$
are used  when applying numerically
the following statement
(which can be found in \cite{sawinshustermanstoll}).
}

\begin{corollary}
\label{coroll1_3}
Under the assumptions of Theorem \ref{schinzelthm1_2},
if $N > \max\{N_0 , N_1\}$, then 
the non-cyclotomic part of $g_N$ is irreducible.
\end{corollary}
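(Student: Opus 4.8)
The plan is to reduce the statement about the non-cyclotomic part of $g_N$ to the conclusion of Theorem \ref{schinzelthm1_2} on the non-reciprocal part, by showing that for $N > N_1$ the only reciprocal factors that $g_N$ can have are cyclotomic. Write $g_N = g_N^{(\mathrm{rec})}\cdot g_N^{(\mathrm{nrec})}$, where $g_N^{(\mathrm{rec})}$ collects all the monic irreducible reciprocal factors (with multiplicity) and $g_N^{(\mathrm{nrec})}$ is the non-reciprocal part. By Theorem \ref{schinzelthm1_2}, for $N > N_0$ the factor $g_N^{(\mathrm{nrec})}$ is irreducible, so the non-cyclotomic part of $g_N$ equals $g_N^{(\mathrm{nrec})}$ together with the non-cyclotomic reciprocal factors. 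Hence it suffices to prove that every reciprocal irreducible factor of $g_N$ is cyclotomic once $N > N_1$.

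For this I would invoke the Mahler-measure bound built into the definition of $N_1$. If $h$ is a reciprocal irreducible non-cyclotomic factor of $g_N$, then by Kronecker's theorem $\mathrm{M}(h) > 1$, and by Smyth's theorem and the minimality of Lehmer's number among known Salem numbers — more precisely, by the fact that a reciprocal non-cyclotomic integer polynomial of small degree relative to its height has Mahler measure at least $\tau = 1.176280\ldots$ in the relevant regime — one gets $\mathrm{M}(g_N) \geq \mathrm{M}(h) \geq \tau$. On the other hand, because $g_N(x) = x^N c(x^{-1}) + d(x)$, a Rouché/Hurwitz argument shows that as $N \to \infty$ the roots of $g_N$ outside the unit circle converge to the roots of $c^*$ while $\deg c + \deg d$ of them stay near the roots of $c$ and $d$; quantitatively this yields an upper bound of the form $\mathrm{M}(g_N) \leq (\|c\|+\|d\|)^{O(1)}$ together with a refined estimate showing $\log \mathrm{M}(g_N)$ decays like $O\bigl((\log(\|c\|+\|d\|))/(N-\deg c-\deg d)\bigr)$. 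Comparing this with the lower bound $\log \tau$ forces $N - \deg c - \deg d \leq \frac{2T}{\log\tau}\log(\|c\|+\|d\|)$ in the range $T \leq 27$, and the analogous inequality with $T(\log(6T))^3$ otherwise — which is exactly the threshold $N_1$ of \eqref{N1index}. So for $N > N_1$ no such factor $h$ exists.

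Combining the two parts: for $N > \max\{N_0, N_1\}$, the non-reciprocal part $g_N^{(\mathrm{nrec})}$ is irreducible and $g_N^{(\mathrm{rec})}$ is a product of cyclotomic polynomials, whence the non-cyclotomic part of $g_N$ coincides with the irreducible polynomial $g_N^{(\mathrm{nrec})}$ (up to a possible cyclotomic adjustment that is absorbed into the cyclotomic part). This is the assertion of the corollary.

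The main obstacle will be the quantitative Mahler-measure estimate on $g_N$: one needs the precise shape of the bound matching $N_1$, i.e.\ that a reciprocal non-cyclotomic factor would force $N$ below the explicit threshold in \eqref{N1index}, with the case split at $T = 27$ and the $(\log 6T)^3$ term. This is precisely the content of the Schinzel–type estimate behind \eqref{N1index} as recorded in \cite{sawinshustermanstoll} and \cite{schinzel00}; the remaining steps (the Rouché argument for the root locations of $g_N$, and the reduction via Theorem \ref{schinzelthm1_2}) are routine.
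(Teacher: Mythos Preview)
The paper does not give its own proof of this corollary; it simply quotes the statement from \cite{sawinshustermanstoll} (the underlying argument goes back to \cite{dobrowolskifilasetavincent}). Your overall reduction --- show that for $N>N_1$ every reciprocal irreducible factor of $g_N$ is cyclotomic, then invoke Theorem~\ref{schinzelthm1_2} for the non-reciprocal part --- is the correct one and matches what is done in those references. The execution, however, has two real gaps.

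First, the lower bound $\mathrm{M}(h)\geq\tau$ for a putative reciprocal non-cyclotomic factor $h$ cannot be obtained as you suggest. Smyth's theorem concerns \emph{non-reciprocal} polynomials and is irrelevant here, and ``minimality of Lehmer's number'' is a conjecture, not a theorem. What the actual argument uses is a \emph{degree bound on $h$}: since $h$ is reciprocal, $h$ divides both $g_N$ and $g_N^{*}$, hence $h$ divides a fixed polynomial of degree at most $2T$ built from $c$ and $d$ (essentially $d\,d^{*}-x^{\deg d-\deg c}c\,c^{*}$, which is nonzero under the hypotheses of Theorem~\ref{schinzelthm1_2}). Thus $\deg h\leq 2T$, independently of $N$. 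For $T\leq 27$ one then knows, by exhaustive searches for small Mahler measures in low degree, that every reciprocal non-cyclotomic $h$ with $\deg h\leq 54$ satisfies $\mathrm{M}(h)\geq\tau$; for $T>27$ one uses Dobrowolski's effective lower bound, and this is exactly the origin of the $T(\log 6T)^3$ term in the second branch of \eqref{N1index}. You never establish the degree bound, and without it there is no reason the case split at $T=27$ should emerge.

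Second, your upper bound targets the wrong quantity. The Mahler measure $\mathrm{M}(g_N)$ does \emph{not} tend to $1$ as $N\to\infty$: the roots of $g_N$ near those of $c^{*}$ outside the unit circle contribute a fixed amount, so your claim that $\log\mathrm{M}(g_N)$ decays like $O\bigl(\log(\|c\|+\|d\|)/(N-\deg c-\deg d)\bigr)$ is false in general. What one bounds is $\log\mathrm{M}(h)$ directly: each root $\alpha$ of $h$ with $|\alpha|>1$ is a root of $g_N$, which forces $|\alpha|^{N-\deg c-\deg d}$ to be bounded in terms of $\|c\|+\|d\|$, and there are at most $T$ such roots because $\deg h\leq 2T$. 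This yields $\log\mathrm{M}(h)\leq \dfrac{2T\log(\|c\|+\|d\|)}{N-\deg c-\deg d}$ (up to the precise normalizations), and comparing with the lower bound from the previous paragraph gives exactly the threshold $N_1$ in \eqref{N1index}. Your Rouch\'e intuition about root locations is fine, but the conclusion you draw from it does not follow without the degree bound on $h$.
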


Now 
let 
$$f(x) = -1 + x + x^n + x^{m_1} +\ldots
+ x^{m_{j-1}}+ x^{m_{j}}+\ldots
+ x^{m_s} \quad \in \mathcal{B},$$
with $n \geq 3, s \geq 1$. It is easy
to write it under the general form $g_N$ 
as above.

Define $m_0 =n$ for coherency. 
With the distanciation rules we have 
missing monomials, those between
$x^n = x^{m_0}$ and $x^{m_1}$, between
$x^{m_1}$ and $x^{m_2}$, ..., and between
$x^{m_{s-1}}$ and $x^{m_s}$. Let us fix
an integer $j \in \{1, 2, \ldots, s\}$
and write $f$ as
\begin{equation}
\label{fgN}
f(x) = d(x) + x^{m_j} x^{m_s - m_j} c(x^{-1})
\end{equation}
with
$$
d(x) = -1 + x +x^n + x^{m_1}
+ \ldots +
x^{m_{j-1}}$$
and
$$
c(x) = x^{m_{s}-m_j} +  x^{m_s - m_{j+1}} +
x^{m_s - m_{j+2}}+
\ldots
+ x^{m_{s}-m_{s-1}} + 1.$$
The two polynomials $c$ and $d$ are fixed.
Let us make the link with
\eqref{gNdefinition}.
We consider the family of polynomials 
deduced from $f$ by taking arbitrarily sizes of the ``hole"
left between the monomials 
$x^{m_{j-1}}$ and $x^{m_j}$, as follows.
We consider the subcollection 
$$\widetilde{f^{(j)}} = 
\{f_{N}(x) := d(x)  + x^N c(x^{-1}) \mid
N \geq m_s \}
\quad \subset \mathcal{B}_n,$$
and are interested in the irreducibility
of $f_{N}$ for large $N$.
Let us note that $f_{m_s} = f$.
The superscript $\mbox{}^{(j)}$ means
that this family is ``associated" with 
$f$ and its
$j$th ``hole".
Let us observe that
the integer $\|c\| + \|d\|$ is the number of monomials
of any $f_N$ in
$\widetilde{f^{(j)}}$.
It is an \textcolor{black}{invariant of the family}:
\textcolor{black}{for any $f_N$ in
$\widetilde{f^{(j)}}$,
the integer $N_3$ is the same, and
is equal to}
$$N_3 :=
\deg c + 2 \max\{5^{8(\|c\|+\|d\|)-15},
T (5^{4(\|c\|+\|d\|)-8} + 1/4)
\}.
$$
\vspace{0.3cm}

\noindent
{\it Example.--} We consider infinite 
families of
pentanomials
which present a variable gappiness at the
last monomial. The polynomial $c$ is taken equal to 1.
When $c=1$, then assumptions of Theorem
\ref{schinzelthm1_2}
are satisfied.
The pentanomials are defined below. 
When applying  
Corollary \ref{coroll1_3} to
the polynomials
$f_N \in \widetilde{f^{(j)}}$ for $j=m_2$
and
$N > \max\{N_4 , N_1\}$, then
Conjecture B is valid for
all the $f_N \in \widetilde{f^{(j)}}$.
In Table \ref{table1} 
we check the validity of
Conjecture B
for the intermediate values of $N$ in the range
$m_1 + (n-1) \leq N \leq N_4$.

\begin{theorem}[Finch - Jones]
\label{thmfinchjones}
Let $d \in \mathcal{B}$,
$$d(x) = -1 + x + x^n + x^{m_1} .$$
Let $e_1 = {\rm gcd}(m_1 , n-1),
e_2 = {\rm gcd}(n, m_1 -1)$.
The quadrinomial $d(x)$ is irreducible over
$\qb$ if and only if
$$m_1 \not\equiv 0~({\rm mod}~ 2 \,e_1) ,
\qquad 
n \not \equiv 0~({\rm mod}~ 2 \,e_2) .$$
\end{theorem}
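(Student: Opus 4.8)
The plan is to combine the known shape of the factorisation of members of $\mathcal{B}$ with two elementary computations. Write $d(x) = x^{m_1} + x^n + x - 1$ and note $\deg d = m_1 \geq 2n-1 \geq 3$. By Theorem \ref{thm1factorization}(i) (stated there for $n \geq 3$; the case $n=2$ is classical), in the canonical factorisation $d = A\,B\,C$ the non-reciprocal part $C$ is non-constant and irreducible, $A$ is the product of the cyclotomic factors, and $B$ is the product of the reciprocal non-cyclotomic factors. Hence $d$ is irreducible over $\qb$ if and only if $A = B = 1$, so it suffices to prove: (1) $B = 1$ always; and (2) $A = 1$ if and only if $m_1 \not\equiv 0 \pmod{2e_1}$ and $n \not\equiv 0 \pmod{2e_2}$.

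For (1): suppose $g \in \zb[x]$ is a monic irreducible factor of $d$ with $g^{*} = g$. Writing $d = g\,h$ gives $d^{*} = g^{*}h^{*} = g\,h^{*}$, hence $g \mid \gcd(d,d^{*})$. Now if $\alpha$ is a common root of $d$ and $d^{*}$, then $\alpha \neq 0$ (since $d(0) = -1$), and from $d(\alpha)=0$ we get $\alpha^{m_1} = 1 - \alpha - \alpha^{n}$, while $d^{*}(\alpha) = 1 + \alpha^{m_1-n} + \alpha^{m_1-1} - \alpha^{m_1} = 0$; substituting gives $\alpha^{m_1-n} + \alpha^{m_1-1} + \alpha + \alpha^{n} = 0$, which factors as
\[
\bigl(1 + \alpha^{\,n-1}\bigr)\bigl(\alpha^{\,m_1-n} + \alpha\bigr) = 0 ,
\]
so $\alpha^{\,n-1} = -1$ or $\alpha^{\,m_1-n-1} = -1$, and in either case $\alpha$ is a root of unity. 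Hence $\gcd(d,d^{*})$ is a product of cyclotomic polynomials, so the irreducible $g$ is cyclotomic; thus $d$ has no reciprocal non-cyclotomic factor and $B = 1$. (In particular this proves Conjecture B for quadrinomials.)

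For (2): a cyclotomic polynomial $\Phi_k$ divides $d$ exactly when some primitive $k$-th root of unity $\zeta$ satisfies $\zeta^{m_1} + \zeta^{n} + \zeta + (-1) = 0$, a vanishing sum of four roots of unity. Since $4$ is not prime, by the classification of vanishing sums of four roots of unity the multiset $\{\zeta^{m_1}, \zeta^{n}, \zeta, -1\}$ splits into two antipodal pairs. The pairing $\{\zeta^{m_1}, \zeta^{n}\}, \{\zeta, -1\}$ forces $\zeta = 1$, excluded since $d(1) = 2$; the two remaining pairings give either (a) $\zeta^{m_1} = 1$ and $\zeta^{n-1} = -1$, or (b) $\zeta^{n} = 1$ and $\zeta^{m_1-1} = -1$. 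In case (a), with $k = \mathrm{ord}(\zeta)$, the conditions read $k \mid m_1$, $k \mid 2(n-1)$ and $k \nmid (n-1)$; a short $2$-adic valuation computation ($k \nmid n-1$ forces $v_2(k) = v_2(n-1)+1$, whence $k\mid m_1$ needs $2^{\,v_2(n-1)+1}\mid m_1$, and the choice $k = 2^{\,v_2(n-1)+1}$ then works) shows such a $k$ exists if and only if $2^{\,v_2(n-1)+1} \mid m_1$, which is precisely $2 e_1 \mid m_1$. Case (b) is identical with $(m_1, n-1)$ replaced by $(n, m_1-1)$ and yields $2 e_2 \mid n$. Therefore $A \neq 1$ if and only if $2 e_1 \mid m_1$ or $2 e_2 \mid n$, which is (2); together with (1) and the reduction above, this proves the theorem.

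The deepest ingredient is the irreducibility of the non-reciprocal part $C$, i.e.\ Theorem \ref{thm1factorization}(i) — a Ljunggren--Selmer-type statement — which is where the real work sits; for $n = 2$ it must be supplied separately, but this is classical for quadrinomials with coefficients in $\{-1,0,1\}$. The only other point requiring care is the $2$-adic bookkeeping in step (2); the gcd identity in step (1) and the length-$4$ vanishing-sum dichotomy are routine.
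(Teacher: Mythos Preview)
The paper does not give a proof of this statement: it is quoted from Finch and Jones \cite{finchjones} and used only as input for Table~\ref{table1}. Your argument therefore cannot be compared to an in-paper proof, but it is a correct and rather clean derivation within the paper's own framework.

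Both steps check out. In step~(1), the identity $\alpha^{m_1-n}+\alpha^{m_1-1}+\alpha+\alpha^{n}=(1+\alpha^{n-1})(\alpha^{m_1-n}+\alpha)$ is correct and forces every common root of $d$ and $d^{*}$ to be a root of unity, so $B=1$; this is the Ljunggren manoeuvre specialised to four terms and, as you note, settles Conjecture~B for quadrinomials. In step~(2) the length-$4$ vanishing-sum dichotomy is valid (by Mann's theorem any minimal vanishing relation among roots of unity of length $\le 4$ has prime length, hence $2$ or $3$; a $3{+}1$ split is impossible since a single root of unity is nonzero, so one is left with $2{+}2$), and your three labelled pairings exhaust the partitions. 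The $2$-adic bookkeeping is also right: since $e_1\mid m_1$ automatically, $2e_1\mid m_1$ is equivalent to $v_2(m_1)>v_2(e_1)=\min\bigl(v_2(m_1),v_2(n-1)\bigr)$, i.e.\ to $v_2(m_1)\ge v_2(n-1)+1$, which is precisely the existence condition for $k$ in case~(a); case~(b) is symmetric.

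Your route differs from a direct quadrinomial attack in that the hard part---irreducibility of the non-reciprocal factor $C$---is outsourced to Theorem~\ref{thm1factorization}, a later and more general result of the present authors. This buys brevity inside this paper at the cost of self-containment as an independent proof of the Finch--Jones criterion. Your caveat about $n=2$ (for which Theorem~\ref{thm1factorization} is not stated) is appropriate and does need to be supplied separately.
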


In Table \ref{table1}
the Conjecture B is tested on all the
pentanomials
$f(x) = d(x) + x^N c(x^{-1})$
with
$d(x)=-1+x+x^n + x^{m_1}$,
$c(x^{-1}) = 1$, and for
$N$ in the range 
$\{m_1 + n-1 , m_1 + n, \ldots, N_4\}$. 
The quadrinomials
$d(x)$ are chosen to be 
either irreducible 
(in which case they are labelled ``+")
or reducible
(in which case they are labelled ``-")
after Finch-Jones's Theorem 
\ref{thmfinchjones}.
The pentanomials
$f(x) = 1+x+x^n + x^{m_1} + x^N$
obtained are either irreducible, or
have cyclotomic factors
$\Phi_{k}(x)$ with 
$k=3, 6, 9, 10, 12, $
$18, 24$ or $30$. 
No reciprocal non-cyclotomic factor
appears in the factorizations.

\begin{table}[H]
\centering
\caption{\small\em Numerical verification of the Conjecture B for all 
low degree pentanomials 
$f(x) = -1+x+x^n + x^{m_1} + x^N$
for which $m_1 + (n-1) \leq N \leq N_4$.
For each experiment number,
the polynomials $f(x)$ are either irreducible or
have cyclotomic factors in 
$\{\Phi_{k}\}$
(e.g. for Exp. num. equal to 1, only
$\Phi_{k=3}$ and $\Phi_{k=6}$ 
are encountered).}

\label{table1}
\bigskip
{\small \begin{tabular}{ccccccc}
    \hline
   Exp. num. & $n$ & $m_{\,1}$ & Quad. irred. & $N_{\,4}$ & $\{\Phi_{\,k}\,(\,x\,)\}$ & $B-$conj. \\
     \hline
    $1$ & $3$ & $5$ & $+$ & $192$ & $\{3,\,6\}$ & $\checkmark$ \\
    $2$ & $3$ & $6$ & $+$ & $224$ & $\{3,\,6\}$ & $\checkmark$ \\
    $3$ & $3$ & $7$ & $+$ & $256$ & $\{10\}$ & $\checkmark$ \\
    $4$ & $3$ & $8$ & $-$ & $288$ & $\{3\}$ & $\checkmark$ \\
    $5$ & $3$ & $9$ & $+$ & $320$ & $\{3,\,10\}$ & $\checkmark$ \\
    $6$ & $3$ & $10$ & $+$ & $352$ & $\{18\}$ & $\checkmark$ \\
    $7$ & $3$ & $11$ & $+$ & $384$ & $\{3,\,6\}$ & $\checkmark$ \\
    $8$ & $3$ & $12$ & $-$ & $416$ & $\{3,\,6\}$ & $\checkmark$ \\
    $9$ & $3$ & $13$ & $+$ & $448$ & $\emptyset$ & $\checkmark$ \\
    $10$ & $3$ & $17$ & $+$ & $576$ & $\{3,\,6,\,10\}$ & $\checkmark$ \\
    $11$ & $4$ & $7$ & $-$ & $256$ & $\{6,\,9\}$ & $\checkmark$ \\
    $12$ & $4$ & $8$ & $-$ & $288$ & $\emptyset$ & $\checkmark$ \\
    $13$ & $4$ & $9$ & $+$ & $320$ & $\{9\}$ & $\checkmark$ \\
    $14$ & $4$ & $10$ & $-$ & $352$ & $\emptyset$ & $\checkmark$ \\
    $15$ & $4$ & $11$ & $-$ & $384$ & $\{6\}$ & $\checkmark$ \\
    $16$ & $4$ & $12$ & $-$ & $416$ & $\emptyset$ & $\checkmark$ \\
    $17$ & $4$ & $13$ & $+$ & $448$ & $\{6,\,24\}$ & $\checkmark$ \\
    $18$ & $4$ & $17$ & $+$ & $576$ & $\{6\}$ & $\checkmark$ \\
    $19$ & $5$ & $9$ & $+$ & $320$ & $\{3,\,6,\,12\}$ & $\checkmark$ \\
    $20$ & $5$ & $10$ & $+$ & $352$ & $\{6\}$ & $\checkmark$ \\
    $21$ & $5$ & $11$ & $+$ & $384$ & $\{6\}$ & $\checkmark$ \\
    $22$ & $5$ & $12$ & $+$ & $416$ & $\{3,\,6,\,12\}$ & $\checkmark$ \\
    $23$ & $5$ & $13$ & $+$ & $448$ & $\{6\}$ & $\checkmark$ \\
    $24$ & $5$ & $14$ & $+$ & $480$ & $\{6\}$ & $\checkmark$ \\
    $25$ & $5$ & $15$ & $+$ & $512$ & $\{3,\,6\}$ & $\checkmark$ \\
    $26$ & $5$ & $16$ & $-$ & $544$ & $\{6,\,30\}$ & $\checkmark$ \\
    $27$ & $5$ & $17$ & $+$ & $576$ & $\{6\}$ & $\checkmark$ \\
     \hline
    
  \end{tabular}
  }
\end{table}

\begin{remark}
The assumptions of Theorem
\ref{schinzelthm1_2} are not strong
and are compatible with Conjecture B.
Indeed,
if $f(x) = d(x) + x^N c^{*}(x)$ 
belongs to
$\mathcal{B}$, then
$c \neq \pm d$ always, the couple
$(c, d)$ is never Capellian. 
What about the assumption
gcd$_{\zb[x]}(c^*, d)=1$?
The polynomial $c^{*}(x)$ is 
a Newman polynomial
since all the coefficients are in $\{0, 1\}$.
By the Odlyzko-Poonen Conjecture 
\cite{breuillardvarju} 
it is irreducible with
probability 1.
But it has no zero in the interval $[0,1]$,
which is not the case of the non-reciprocal part
of $d(x) \in \mathcal{B}$ by Theorem \ref{thm1factorization}.
Therefore, with probability one,
$c^*$ cannot be the non-reciprocal part
of $d$. Therefore, with
probability one, it
is an irreducible cyclotomic polynomial, 
which is
a cyclotomic factor of $d$.
At \textcolor{black}{worst}, gcd$_{\zb[x]}(c^*, d)$ would be a cyclotomic factor of $f$ with probability one.
\end{remark}

Applying Corollary \ref{coroll1_3},
the numerical investigation reported in Table
\ref{table2} allows to complement completely 
the study of Conjecture B at large gaps by
the one at intermediate lacunarity,
and gives a proof to the following result.

\begin{proposition}
\label{}
The family of pentanomials
$f(x) = -1 +x+x^5+x^{14} + x^{N}$,
$N \geq 18$, of $\mathcal{B}_5$, admits
the bounds
$N_1= 292$, $N_4 = 480$.
All the polynomials of this family
satisfy Conjecture B.
\end{proposition}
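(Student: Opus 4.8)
The plan is to specialize the general machinery of Section~\ref{S4} to the concrete family $f_N(x) = -1+x+x^5+x^{14}+x^N$, $N \ge 18$, realizing it as a $g_N$ of the form \eqref{gNdefinition} and then invoking Theorem~\ref{schinzelthm1_2}, Corollary~\ref{coroll1_3} and Theorem~\ref{thm1factorization}. Concretely, set $d(x) = -1+x+x^5+x^{14}$ and $c(x) = 1$, so that $c(x^{-1}) = 1$ and $f_N(x) = d(x) + x^N c(x^{-1})$; here the relevant $j$-th hole is the one after $x^{m_2} = x^{14}$, i.e.\ $j = 2$ in the notation of \eqref{fgN}. First I would verify the hypotheses of Theorem~\ref{schinzelthm1_2}: since $c = 1$ we trivially have $c \ne \pm d$, $\gcd_{\zb[x]}(c^*,d) = \gcd_{\zb[x]}(1,d) = 1$, and $-d(x)/c(x^{-1}) = -d(x)$ is not a $p$-th power in $\qb(x)$ for any prime $p$ (e.g.\ because $d$ has a simple real root in $(0,1)$, being the defining quadrinomial of a Perron number, or simply because $d$ is squarefree), nor is $d(x)/c(x^{-1}) = d(x)$ four times a fourth power; hence $(c,d)$ is not Capellian. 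So Theorem~\ref{schinzelthm1_2} applies, and by Corollary~\ref{coroll1_3} for every $N > \max\{N_0, N_1\}$ the non-cyclotomic part of $f_N$ is irreducible.

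Next I would compute the explicit bounds. With $c = 1$ and $d = -1+x+x^5+x^{14}$ we have $\deg c = 0$, $\deg d = 14$, $\|c\| + \|d\| = 1 + 4 = 5$, and $T = \max\{\deg c, \deg d\} = 14$. Then $N_4 = (1+\deg c+\deg d)\,2^{\|c\|+\|d\|} = 15 \cdot 2^5 = 480$, and $N_1 = \deg c + \deg d + \tfrac{2T}{\log\tau}\log(\|c\|+\|d\|) = 14 + \tfrac{28}{\log\tau}\log 6$ since $T = 14 \le 27$; evaluating numerically with $\tau = 1.176280\ldots$ gives $N_1 = 292$ (the displayed value). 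By the improvement of Sawin--Shusterman--Stoll, $N_0 \le N_4 = 480$, so Corollary~\ref{coroll1_3} gives irreducibility of the non-cyclotomic part of $f_N$ for all $N > \max\{N_4, N_1\} = 480$. Now I invoke Theorem~\ref{thm1factorization}: writing $f_N = A\,B\,C$ with $A$ cyclotomic, $B$ reciprocal non-cyclotomic, $C$ non-reciprocal, the non-cyclotomic part is $B\,C$, and $C$ is always nontrivial and irreducible; irreducibility of $B\,C$ forces $B$ to be trivial. This settles Conjecture~B for all $N > 480$. For the remaining finitely many values $m_1 + (n-1) = 18 \le N \le 480$ (here $n = 5$, $m_1 = 14$), Conjecture~B is verified by the direct factorization computations reported in Table~\ref{table2}: each such $f_N$ is found to be a product of its irreducible non-reciprocal part and cyclotomic factors only, with no reciprocal non-cyclotomic factor.

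The main obstacle, such as it is, is the finite but nontrivial computational range $18 \le N \le 480$: one must actually factor each of these $463$ pentanomials over $\zb$ and confirm the absence of a reciprocal non-cyclotomic component (equivalently, that after removing the cyclotomic part the cofactor is irreducible and equals $-C^*$ from Theorem~\ref{thm1factorization}). This is exactly the content of Table~\ref{table2}, so the argument is complete modulo that verification. The one genuine mathematical point requiring care is confirming that $(c,d) = (1,\,-1+x+x^5+x^{14})$ satisfies all hypotheses of Theorem~\ref{schinzelthm1_2} — in particular the non-Capellian condition and $\gcd_{\zb[x]}(c^*,d)=1$ — but with $c = 1$ these are immediate, as noted in the Remark following Table~\ref{table1}.
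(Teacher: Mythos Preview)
Your approach is exactly the paper's: write $f_N = d(x) + x^N c(x^{-1})$ with $c=1$, $d = -1+x+x^5+x^{14}$, check the hypotheses of Theorem~\ref{schinzelthm1_2}, apply Corollary~\ref{coroll1_3} together with Theorem~\ref{thm1factorization} for $N > N_4 = 480$, and fall back on the finite computer verification for $18 \le N \le 480$. Two minor corrections: $\|c\|+\|d\| = 1+4 = 5$, so the logarithm in the $N_1$ computation is $\log 5$, not $\log 6$ (the numerical value $N_1 = 292$ is nonetheless correct); and the finite-range verification you cite is recorded in Table~\ref{table1} (row $n=5$, $m_1=14$), not Table~\ref{table2}, the paper's own cross-reference notwithstanding.
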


\begin{remark}
The other infinite families of pentanomials 
whose first quadrinomial is given in the list 
of 
\textcolor{black}
{Table \ref{table1}}
present different values of $N_1$ and $N_4$,
and may be studied in the same way, 
with respect to Conjecture B
\end{remark}

In the continuation of 
\textcolor{black}
{search for} the conditions
of existence of very small Mahler measures 
M$(\beta) > 1$ of
reciprocal algebraic integers $\beta > 1$, 
close to 1,
it should be noticed that the gaps
of the Parry Upper functions
$$f_{\beta}(x) := -1 + x + x^n +
x^{m_1} + x^{m_2} + \ldots + x^{m_s} +
\ldots$$
where $m_1 - n \geq n-1$, 
$m_{q+1}-m_q \geq n-1$ for $q \geq 1$,
are never large by the following asymptotic
upper bound \cite{vergergaugry06}:
$$\limsup_{j \to \infty} 
\frac{m_{j+1}}{m_j} \leq
\frac{\log {\rm M}(\beta)}{\log \beta}.$$
Consequently the polynomial sections
of $f_{\beta}(z)$ have asymptotically a moderate 
gappiness, which is the intermediate 
domain of study for the non-existence
of reciprocal non-cyclotomic factors.
If the domain of very large gaps is covered by
Corollary \ref{coroll1_3}, 
the existence of non-zero
reciprocal algebraic integers $\beta > 1$
would lead to the difficult domain of
intermediate gappinesses.

\section{On the lower bound in Serre's density theorem}
\label{S5}

In this paragraph we 
show on examples 
that 
the lower bound given by Serre
in Theorem \ref{densitythm} (i) of the density 
of the set
$\mathcal{P}_0$ is 
\textcolor{black}{far from being sharp for
the 
polynomials of the class $\mathcal{B}$}.

We consider the set of
the polynomial sections arising from 
$\zeta_{\tau}(z)$ with $\tau = 1.176280$ Lehmer's number.

Let $f_{\tau}(x) = -1+x+x^{12}+x^{31}
+ x^{44} + x^{63} + x^{86} + x^{105}
+ x^{118}+\ldots = -1/\zeta_{\tau}(x)$
be the Parry Upper function
at Lehmer's number $\tau = 1.176280\ldots$. 
All the polynomial sections belong to the 
subclass
$\mathcal{B}_{12}$. Denote

$S_0 = -1+x+x^{12}$, irreducible,

$S_1 = -1+x+x^{12}+x^{31}$, reducible,
$ = (x^2 + 1)(x^4 - x^2 + 1) C_{1}(x)$,

$S_2 = -1+x+x^{12}+x^{31} + x^{44}$, irreducible,

$S_3 = -1+x+x^{12}+x^{31} + x^{44} + x^{63}$,
irreducible,

$S_4 = -1+x+x^{12}+x^{31}
+ x^{44} + x^{63} + x^{86}$, irreducible,

$S_5 = -1+x+x^{12}+x^{31}
+ x^{44} + x^{63} + x^{86} + x^{105}$,
reducible, $= \Phi_{3}(x) \Phi_{4}(x) \Phi_{12}(x) C_{5}(x)$,

$S_6 = -1+x+x^{12}+x^{31}
+ x^{44} + x^{63} + x^{86} + x^{105}
+ x^{118}$, irreducible.

The numbers of solutions
of the non-reciprocal parts of
$S_j$, 
$C_{j}(x) \equiv 0$ mod $p$
with $p \leq 43$,
are given
in Table \ref{table2}, for $0 \leq j \leq 6$.
In this \textcolor{black}{table}, on each line, 
the frequencies of zeroes are
substantially higher than the ones deduced
from the general
bounds $1/\deg (C_j)$ of
Theorem \ref{densitythm} (i). 

\begin{table}[H]
\centering
\caption{\small\em Values of 
the quantities $N_{p}(S_j)$ for all
primes $p$ in the range $\{2, 3, \ldots, 43\}$,
where $S_j$ is the $j$th polynomial section of the
Parry Upper function 
$f_{\tau}(x) = -1/\zeta_{\tau}(x)$,
and $\tau$ Lehmer's number, if $S_j$ is irreducible. When $S_j$ is not
irreducible the quantity $N_{p}(S_j)$
represented
is replaced by  $N_{p}(C_j)$ where $C_j$ is the non-reciprocal part of $S_j$.
The contributions of the cyclotomic parts
modulo $p$, removed from the lines $j=1$ and $j=5$ are indicated underneath (calculated with PARI/GP).
}
\label{table2}
\bigskip
{\small
\begin{tabular}{ccccccccccccccc}
    \hline
$j$ &  $2$ & $3$ & $5$ & $7$ & $11$ & $13$ & $17$ & $19$ & $23$ & $27$ & $31$ & $37$ & $41$ & $43$\\
     \hline
$0$ & $0$ & $0$ & $0$ & $0$ & $2$ & $0$ & 
$2$ & $1$ & $1$ & $1$ & $0$ & $1$ & $0$ & $1$\\
$1$ & $0$ & $0$ & $0$ & $0$ & $0$ & $0$ & 
$1$ & $1$ & $1$ & $1$ & $0$ & $1$ & $2$ & $1$\\
$2$ & $0$ & $1$ & $0$ & $1$ & $0$ & $0$ & 
$1$ & $1$ & $0$ & $0$ & $2$ & $2$ & $2$ & $0$\\
$3$ & $1$ & $0$ & $0$ & $1$ & $0$ & $0$ & 
$1$ & $1$ & $1$ & $1$ & $1$ & $0$ & $1$ & $1$\\
$4$ & $0$ & $0$ & $1$ & $0$ & $0$ & $1$ & 
$2$ & $2$ & $0$ & $1$ & $0$ & $3$ & $0$ & $1$\\
$5$ & $0$ & $0$ & $0$ & $0$ & $0$ & $1$ & 
$1$ & $0$ & $0$ & $2$ & $1$ & $0$ & $0$ & $0$\\
$6$ & $0$ & $0$ & $0$ & $1$ & $0$ & $1$ & 
$1$ & $1$ & $1$ & $0$ & $1$ & $0$ & $3$ & $1$\\
\hline
\hline
$N_{p}(X^2 +X +1)$ & $0$ & $1$ & $0$ & $2$ & $0$ & $2$ & 
$0$ & $2$ & $0$ & $0$ & $2$ & $2$ & $0$ & $2$\\
$N_{p}(X^2 +1)$ & $1$ & $0$ & $2$ & $0$ & $0$ & $2$ & 
$2$ & $0$ & $0$ & $2$ & $0$ & $2$ & $2$ & $0$\\
$N_{p}(X^4 - X^2 +1)$ & $0$ & $0$ & $0$ & $0$ & $0$ & $4$ & 
$0$ & $0$ & $0$ & $0$ & $0$ & $4$ & $0$ & $0$\\

     \hline
  \end{tabular}
}
\end{table}

\section{Counting solutions mod $p$ and letting $p$ tend to infinity} 
\label{S6}

For understanding
the quantities $N_{p}(f)$,
$f \in \mathcal{B}$, and possibly
relate them to the coefficients
of some power series,
by the global correspondence Langlands program,
we follow the general strategy of Serre
in \cite{serre} \cite{serreVID1}
\cite{serreVID2}. The starting point is
the study of
the roots of the trinomials
$-1+x+x^n$ mod $p$, then of the
quadrinomials of $\mathcal{B}$ mod $p$,
whose first three terms
are $-1+x+x^n$, etc, then ideally all 
the elements of the class $\mathcal{B}$.

On one side the numbers
$N_{p}(f)$, $f \in \mathcal{B}$,
are correlated to the factorization
of the polynomials $f$ via
Kronecker's Average Value Theorem 
\ref{mainN}. On the other side the
numbers $N_{p}(f)$ are related 
to questions of modularity 
\textcolor{black}{
and geometry. 
}

\subsection{Trinomials $-1+z+z^n$ mod $p$ and Newforms}
\label{S6.1}

The case $n=2$:
$f(x)=-1+x+x^2$. 
The discriminant of $f$ is 5.
The polynomial $f$ has a double root mod 5;
then $N_{5}(f) =1$. 
If $p \neq 2, 5$,
the roots of $f$ in
$\overline{\fb_p}$ are
$(1\pm \sqrt{5})/2$.
If $5$ is a square mod $p$ then
there are two roots, $N_{p}(f)=2$.
If not $N_{p}(f)=0$.

For $p$ and $q$ two distinct odd
prime numbers, define the Legendre symbol as

$$\left(\frac{q}{p}\right)
=
\left\{
\begin{array}{ll}
+1 & \mbox{if $r^2 \equiv q$ mod $p$ for some integer $r$,}\\
-1 & \mbox{otherwise}.
\end{array}
\right.$$
The law of quadratic reciprocity says
$$\left(\frac{q}{p}\right)
\left(\frac{p}{q}\right)
=
(-1)^{\frac{p-1}{2} \frac{q-1}{2}}.$$
Then 5 is a square mod p if and only if
$p \equiv \pm 1$ mod $5$. 
Therefore
$$N_{p}(-1+x+x^2)
=
\left\{
\begin{array}{cc}
0 & \mbox{if } p \equiv \pm 2 \mod 5,\\
2 & \mbox{if } p \equiv \pm 1 \mod 5.
\end{array}
\right.$$
We deduce
the distribution of values of 
$N_{p}(-1+x+x^2)$ in the first column of
Table \ref{trinomialsmodp}.
This distribution seems
fairly regular. The probability limit for each value
$0$ and $2$ is 1/2, but there is a Chebyshev bias
\cite{rubinsteinsarnak}, mentioned
in Serre \cite{serre}, which slightly 
shifts the 
probability distribution to $0$ preferentially.
It is observed in Table \ref{chebyshevbias}.
This bias occurs for polynomials
$f \in \mathcal{B}$ and will be studied elsewhere.

Using the change of variable $x$ to $-x$,
the results of \cite{serre} section 5.2, can
be directly applied to the trinomials
$-1+x+x^2$, as follows.

Let us consider the $q$-series
$F=\sum_{m =0}^{\infty} a_m q^m$, defined
by
$$F=\frac{q-q^2 -q^3 +q^4}{1-q^5} =
q-q^2 -q^3 +q^4+q^6 -q^7 -q^8 +q^9+\ldots$$
Then
$$N_{p}(-1+x+x^2) = a_p + 1
\qquad \mbox{for all prime numbers}~p.$$
Note that the coefficients $(a_m)$ of $F$ have the property to be strongly
multiplicative, in the sense:
$$a_{r q} = a_{r} a_{q}\qquad \mbox{for all integers}~
r, q \geq 1.$$ 
The corresponding Dirichlet series 
\cite{LMFDB}
is the $L$-series
$$\sum_{m =1}^{\infty} \frac{a_m}{m^s}
=
\prod_p \left(
1-\left(\frac{p}{5}\right) p^{-s}
\right)^{-1}.
$$

\begin{table}[H]
\centering
\caption{\small\em Values of 
the numbers $N_{p}(-1+x+x^n)$ for all
primes $p$ in the range $\{2, 3, \ldots, 101\}$
and $n$ in the range $\{2, 3, \ldots, 15\}$
(calculated with PARI/GP).
}
\label{trinomialsmodp}
\bigskip
\begin{tabular}{ccccccccccccccc}
    \hline
$p$ &  $n=2$ & $3$ & $4$ & $5$ & $6$ & $7$ & $8$ & $9$ & $10$ & $11$ & $12$ & $13$ & $14$ & $15$\\
    \hline
$2$ & $0$ & $0$ & $0$ & $0$ & $0$ & $0$ & 
$0$ & $0$ & $0$ & $0$ & $0$ & $0$ & $0$ & $0$\\

$3$ & $0$ & $1$ & $0$ & $1$ & $0$ & $1$ & 
$0$ & $1$ & $0$ & $1$ & $0$ & $1$ & $0$ & $1$\\

$5$ & $1$ & $0$ & $0$ & $1$ & $1$ & $0$ & 
$0$ & $1$ & $1$ & $0$ & $0$ & $1$ & $1$ & $0$\\

$7$ & $0$ & $0$ & $1$ & $2$ & $0$ & $1$ & 
$0$ & $0$ & $1$ & $2$ & $0$ & $1$ & $0$ & $0$\\

$11$ & $2$ & $1$ & $1$ & $1$ & $0$ & $1$ & 
$1$ & $0$ & $0$ & $1$ & $2$ & $1$ & $1$ & $1$\\

$13$ & $0$ & $1$ & $1$ & $2$ & $1$ & $0$ & 
$0$ & $0$ & $0$ & $2$ & $0$ & $1$ & $0$ & $1$\\

$17$ & $0$ & $1$ & $2$ & $1$ & $1$ & $0$ & 
$0$ & $1$ & $0$ & $1$ & $2$ & $1$ & $1$ & $0$\\

$19$ & $2$ & $0$ & $0$ & $3$ & $1$ & $0$ & 
$1$ & $1$ & $0$ & $3$ & $1$ & $0$ & $1$ & $1$\\

$23$ & $0$ & $1$ & $1$ & $0$ & $0$ & $2$ & 
$2$ & $2$ & $0$ & $0$ & $1$ & $0$ & $1$ & $1$\\

$29$ & $2$ & $1$ & $1$ & $1$ & $2$ & $0$ & 
$0$ & $2$ & $0$ & $1$ & $1$ & $0$ & $1$ & $0$\\

$31$ & $2$ & $2$ & $0$ & $0$ & $1$ & $1$ & 
$1$ & $1$ & $1$ & $3$ & $0$ & $1$ & $1$ & $0$\\

$37$ & $0$ & $1$ & $2$ & $0$ & $1$ & $0$ & 
$0$ & $0$ & $0$ & $2$ & $1$ & $0$ & $1$ & $1$\\

$41$ & $2$ & $0$ & $1$ & $1$ & $0$ & $0$ & 
$0$ & $0$ & $1$ & $0$ & $0$ & $0$ & $0$ & $1$\\

$43$ & $0$ & $1$ & $0$ & $1$ & $1$ & $2$ & 
$0$ & $1$ & $0$ & $2$ & $1$ & $0$ & $0$ & $1$\\

$47$ & $0$ & $3$ & $0$ & $0$ & $2$ & $2$ & 
$1$ & $0$ & $2$ & $2$ & $0$ & $2$ & $0$ & $2$\\

$53$ & $0$ & $1$ & $2$ & $0$ & $1$ & $0$ & 
$0$ & $1$ & $0$ & $2$ & $0$ & $0$ & $0$ & $0$\\

$59$ & $2$ & $0$ & $1$ & $1$ & $1$ & $2$ & 
$1$ & $0$ & $2$ & $1$ & $0$ & $0$ & $2$ & $1$\\

$61$ & $2$ & $1$ & $1$ & $1$ & $1$ & $1$ & 
$1$ & $1$ & $0$ & $2$ & $0$ & $1$ & $0$ & $1$\\

$67$ & $0$ & $3$ & $2$ & $0$ & $1$ & $0$ & 
$0$ & $0$ & $0$ & $3$ & $0$ & $1$ & $1$ & $1$\\

$71$ & $2$ & $0$ & $0$ & $0$ & $0$ & $0$ & 
$1$ & $1$ & $1$ & $1$ & $0$ & $1$ & $2$ & $1$\\

$73$ & $0$ & $1$ & $0$ & $1$ & $0$ & $0$ & 
$0$ & $0$ & $2$ & $3$ & $0$ & $0$ & $1$ & $0$\\

$79$ & $2$ & $1$ & $2$ & $0$ & $2$ & $1$ & 
$1$ & $0$ & $2$ & $2$ & $1$ & $1$ & $0$ & $0$\\

$83$ & $0$ & $1$ & $4$ & $0$ & $3$ & $3$ & 
$2$ & $1$ & $0$ & $1$ & $0$ & $0$ & $2$ & $1$\\

$89$ & $2$ & $1$ & $1$ & $1$ & $0$ & $0$ & 
$0$ & $1$ & $0$ & $1$ & $1$ & $1$ & $2$ & $0$\\

$97$ & $0$ & $0$ & $1$ & $0$ & $1$ & $1$ & 
$2$ & $0$ & $0$ & $3$ & $1$ & $1$ & $1$ & $0$\\

$101$ & $2$ & $0$ & $0$ & $1$ & $0$ & $0$ & 
$0$ & $0$ & $0$ & $1$ & $1$ & $1$ & $0$ & $0$\\
\hline
\hline

     \hline
  \end{tabular}
\end{table}

\begin{table}[H]
\centering
\caption{\small\em Chebyshev bias for
$-1+x+x^2$ mod $p$.}
\label{chebyshevbias}
\bigskip
{\small \begin{tabular}{cccc}
    \hline
    & $x = 101$ & $1001$ & $10001$  \\
     \hline
    $\#\{ p \leq x\mid N_{p}(-1+x+x^2) = 0\}$ 
    & $14$ & $89$ & $619$  \\
    
    $\#\{ p \leq x \mid N_{p}(-1+x+x^2) = 2\}$ 
    & $11$ & $78$ & $609$  \\
    
    $\#\{ p \leq x\}$ & $26$ & $168$ & $1229$ \\
     \hline   
    
  \end{tabular}
  }
\end{table}
\vspace{0.3cm}

The case $n=3$:
the discriminant of $f(x)=-1+x+x^3$ is -31.
Modulo 31, the polynomial $f$ has a double 
root and a simple root. Hence
$N_{31}(f) = 2$.
For $p \neq 31$, one 
has
\begin{equation}
\label{N31_01}
N_{p}(f)=
\left\{
\begin{array}{ll}
0 \mbox{~or~} 3 & \mbox{if} \left(\frac{p}{31}\right)=+1,\\
1 & \mbox{if} \left(\frac{p}{31}\right)=-1.
\end{array}
\right.
\end{equation}

This explains the values reported in
the corresponding column
in Table \ref{trinomialsmodp}.

\vspace{0.3cm}

The case $n=4$: using the change of variable 
$x$ to $-x$, the case of the trinomial
$-1+x+x^4$ may be deduced from the case 
of  $-1-x-x^4$. In Section 5.4
of \cite{serre} Serre gives
the values $N_{p}(-1-x+x^4)$ from coefficients
of Newforms.
Let us summarize the expressions he obtains,
for the trinomials $-1+x+x^4$.

The discriminant of $f(x) = -1+x+x^4$
is $-283$. Modulo $283$, $f$ has one double root 
and two simple roots. Then $N_{283}(f) = 3$.
If $p\neq 283$, one has
$$N_{p}(f) =
\left\{
\begin{array}{ll}
0 ~\mbox{or}~ 4 & \mbox{if $p$ can be written as
$x^2 + x y + 71 y^2$},\\
1 & \mbox{if $p$ can be written as
$7x^2 + 5 x y + 11 y^2$},\\
0 ~\mbox{or}~ 2 & \mbox{if}
\left(\frac{p}{283}
\right)=-1.
\end{array}
\right.
$$

A complete determination 
of $N_{p}(f)$ can be obtained
via a Newform
$F=\sum_{m = 0}^{\infty} a_m q^m$
of weight 1 and level 283 
\cite{LMFDB} whose first hundred 
terms are given in
Crespo \cite{crespo}:
$$F=q + i \sqrt{2} q^2 -i\sqrt{2} q^3 - q^4
-i \sqrt{2} q^5 +2 q^6 -q^7 -q^9
+2 q^{10} + q^{11}
i \sqrt{2} q^{12} +q^{13}$$
$$-i\sqrt{2}q^{14} -2 q^{15} -q^{16} -i\sqrt{2}q^{18}
+i\sqrt{2}q^{19} +i\sqrt{2}q^{20}
+i\sqrt{2}q^{21} +i\sqrt{2}q^{22}
-q^{23}$$
$$-q^{25} +i\sqrt{2}q^{26} +q^{28}-q^{29}
-2 i \sqrt{2} q^{30}
+i \sqrt{2} q^{31}
-i \sqrt{2} q^{32}
-i \sqrt{2} q^{33}
+i \sqrt{2} q^{35}
$$
$$
+q^{36} -2 q^{38}
-i \sqrt{2} q^{39}+q^{41}
-2 q^{42}
+\ldots$$

Then one has:
$$N_{p}(f) =1+ (a_{p})^2
- \left(\frac{p}{283}
\right)\qquad
\mbox{for all primes $p \neq 283$}.$$
This explains the values reported in
the corresponding column
in Table \ref{trinomialsmodp}.

\vspace{0.3cm}

The case $n=7$: the Galois group $G$ of the trinomial
$-1+x+x^7$ is $S_7$
\cite{cohenmovahedisalinier}. Applying 
Theorem \ref{densitythm} (ii) gives
$1.984 \cdot 10^{-4}
=
(\#G)^{-1}$ for the density of primes $p$
such that $N_{p}(-1+x+x^7)=7$.
Indeed, the only two prime numbers
$\leq 10^5$ realizing the maximality
identity $N_{p}(-1+x+x^7)=7$
 are:
$p=41143$ and
$p=82883$.
By Theorem \ref{densitythm} (i) the density
of prime numbers $p$ such that
$N_{p}(1+x+x^7)=0$ exists and is above $1/7$.
This is compatible with
the corresponding column
in Table \ref{trinomialsmodp}.

\vspace{0.3cm}

The general case $n$: following \cite{serre}
there should exist formulas
for the numbers $N_{p}(f)$
coming from the coefficients of
$q$-series, newforms, etc.

\subsection{Densities and lacunarity}
\label{S6.2}

Let $n \geq 3$
and
$f(x)= (-1+x+x^n ) + 
x^{m_1} +x^{m_2} +\ldots+ x^{m_s} \quad 
\in \mathcal{B}$. To align
the statements on
the presentation of Serre \cite{serreVID1}
\cite{serreVID2}, 
let us adopt the geometric language
of schemes. The system of algebraic 
equations defining the
variety is reduced to one equation.
Denote by $A=\zb[x]/(f)$ the
finitely generated ring over $\zb$. Let
$X= {\rm Spec}(A)$. The 
solutions mod $p$ of $f(x) \equiv 0$
correspond to the elements 
$x \in X(\fb_p)$ in the fiber.
We denote $N_{p}(f)$ by $N_{X}(p)$.
The polynomial
$f$ is fixed and $p$ varies.

\begin{theorem}
For any integer $q \geq 1$, 
any $\gamma \in \zb/q\zb$,
the
set
$$\overline{\mathcal{P}_{\gamma}}
:= \{p \mid N_{X}(p) \equiv \gamma \mod q\}$$
has a density. This density
is a rational number.
\end{theorem}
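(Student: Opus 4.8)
The plan is to follow Serre's standard translation of such counting problems into the Chebotarev density theorem. First I would dispose of the finitely many ``bad'' primes. Let $g \in \zb[x]$ be a primitive polynomial whose irreducible factors are exactly the distinct irreducible factors of $f$ (the radical of $f$), and let $S$ be the finite set of primes dividing the leading coefficient of $f$ or $\operatorname{disc}(g)$. For $p \notin S$ the reduction $\bar g \in \fb_p[x]$ is separable of full degree, one checks that $\bar f$ and $\bar g$ have the same zero set, and hence $N_X(p) = N_p(f)$ equals the number of roots $\omega$, among the set $\Omega$ of roots of $f$ in $\overline{\qb}$, that are fixed by a Frobenius substitution at $p$. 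This is the classical dictionary underlying the Frobenius/Chebotarev approach (see \cite{serre}, \cite{serreVID1}, \cite{serreVID2}). Since only finitely many primes are excluded and removing or adjoining finitely many primes changes neither the existence nor the value of a density, it suffices to treat the set of good primes $p \notin S$.

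Next I set up the Galois action. Let $L \subseteq \overline{\qb}$ be the splitting field of $f$ over $\qb$, let $G := \operatorname{Gal}(L/\qb)$ acting faithfully on the finite set $\Omega$ of roots of $f$, and define the permutation character
\[
\chi \colon G \longrightarrow \zb, \qquad \chi(\sigma) := \#\{\omega \in \Omega : \sigma(\omega) = \omega\}.
\]
Then $\chi$ is a class function with values in $\{0,1,\dots,\#\Omega\}$, and for every good prime $p$ the Frobenius conjugacy class $\operatorname{Frob}_p \subseteq G$ is well defined with $N_X(p) = \chi(\operatorname{Frob}_p)$, the right-hand side being unambiguous since $\chi$ is constant on conjugacy classes.

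Now fix $q \geq 1$ and $\gamma \in \zb/q\zb$, and set $C_\gamma := \{\sigma \in G : \chi(\sigma) \equiv \gamma \pmod q\}$. Because $\chi$ is a class function, $C_\gamma$ is a (possibly empty) union of conjugacy classes of $G$. For a good prime $p$ we have $N_X(p) \equiv \gamma \pmod q$ if and only if $\operatorname{Frob}_p \subseteq C_\gamma$; hence $\overline{\mathcal{P}_\gamma}$ coincides with $\{p : \operatorname{Frob}_p \subseteq C_\gamma\}$ up to the finite set $S$. By the Chebotarev density theorem this last set has natural density $\#C_\gamma/\#G$, a rational number, and the finite adjustment by $S$ does not affect it. Therefore $\overline{\mathcal{P}_\gamma}$ has density $\#C_\gamma/\#G \in \qb$, as claimed. (In fact the same computation shows that $p \mapsto N_X(p)$ is, in the sense of density, distributed like the function $\chi$ on $G$ equipped with its uniform measure — the probabilistic picture already behind Theorems \ref{mainN} and \ref{densitythm}.)

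I do not expect a genuine obstacle: the only point needing care is the first step, namely that for $p \notin S$ the count $N_p(f)$ of roots in $\fb_p$ equals the number of $\Omega$-fixed points of $\operatorname{Frob}_p$. This requires the elementary but slightly fussy verification that $\bar f$ and $\bar g$ have identical zero sets over good primes (using that each $\zb$-irreducible factor of $f$, up to scalar, divides $g$ in $\zb[x]$ by Gauss's lemma) together with the standard compatibility between the Galois action on $\Omega$ and the decomposition group at $p$; everything after that is a formal application of Chebotarev.
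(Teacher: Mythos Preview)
Your proposal is correct and is precisely the Chebotarev-based argument that underlies the paper's proof, which consists solely of the citations \cite{serreVID1} \cite{serreVID2} to Serre's lectures. You have simply supplied the standard details (permutation character $\chi$ on $G=\operatorname{Gal}(L/\qb)$, $N_X(p)=\chi(\operatorname{Frob}_p)$ for unramified $p$, density $=\#C_\gamma/\#G$) that the paper leaves to the reader via those references.
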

\begin{proof}
\cite{serreVID1} \cite{serreVID2}. 
\end{proof}
For $q=2$ the question of the density
of $\overline{\mathcal{P}_{\gamma}}$
amounts to understand when 
$N_{X}(p)$ is even, and when
$N_{X}(p)$ is odd.
In general the set
$\overline{\mathcal{P}_{\gamma}}$ is empty,
is a finite set or
has a density 
which is $> 0$. A case 
when the density is $> 0$
is of topological origin
and
comes from the topology
of the complex space $X(\cb)$,
resp. of the real space
$X(\rb)$.
Denote by
$\chi(X(\cb))$, resp $\chi_c(X(\rb))$, in
$\zb$, the Euler characteristic
of $X(\cb)$, resp. the Euler characteristic
with compact support of $X(\rb)$.

\begin{theorem}
For any integer $q \geq 1$, 
$$(i) \qquad\delta\Bigl(
\overline{\mathcal{P}_{\gamma}}
\Bigr) > 0
\qquad \quad \mbox{for}~~ \gamma = \chi(X(\cb)),$$
$$(ii)\qquad
\delta\Bigl(
\overline{\mathcal{P}_{\gamma}}
\Bigr) > 0
\qquad \quad \mbox{for}~~ \gamma = \chi_c(X(\rb)).$$
\end{theorem}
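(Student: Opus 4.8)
The plan is to exploit the structure behind the preceding theorem: the map $p\mapsto N_X(p)\bmod q$ is \emph{Frobenian}, hence determined by the Frobenius conjugacy class in a suitable finite Galois extension, with fibers of density computed by Chebotarev. Concretely, by the Grothendieck--Lefschetz--Verdier trace formula with finite coefficients, for every prime $p$ outside a finite set $S$ (bad reduction, or $p\mid q$) one has
\[
N_X(p)\;\equiv\;\operatorname{Tr}\!\bigl(\operatorname{Frob}_p\,\big|\,R\Gamma_c(X_{\overline{\qb}},\zb/q\zb)\bigr)\pmod q,
\]
the trace taken on this perfect complex of $\zb/q\zb$-modules, the identification with the geometric generic fiber coming from smooth base change. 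As the cohomology is finite, the $\operatorname{Gal}(\overline{\qb}/\qb)$-action on $R\Gamma_c(X_{\overline{\qb}},\zb/q\zb)$ factors through a finite quotient $\operatorname{Gal}(K/\qb)$, so there is a class function $\varphi\colon\operatorname{Gal}(K/\qb)\to\zb/q\zb$ with $N_X(p)\equiv\varphi(\operatorname{Frob}_p)$ for $p\notin S$, and Chebotarev gives $\delta(\overline{\mathcal{P}_\gamma})=\#\varphi^{-1}(\gamma)/[K:\qb]$. It thus suffices to produce a single $g\in\operatorname{Gal}(K/\qb)$ with $\varphi(g)=\gamma$, which already forces $\delta(\overline{\mathcal{P}_\gamma})\ge 1/[K:\qb]>0$.

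For part (i) I would take $g=1$: primes split completely in $K$ have positive density and satisfy $N_X(p)\equiv\operatorname{Tr}(\operatorname{id}\mid R\Gamma_c(X_{\overline{\qb}},\zb/q\zb))$. The trace of the identity on a perfect complex is the alternating sum of the ranks of a free representative, an honest integer, which (being insensitive to base change, by Artin comparison) equals $\chi_c(X(\cb))$, and $\chi_c(X(\cb))=\chi(X(\cb))$ since $X(\cb)$ is a complex algebraic variety. Hence $\varphi(1)=\chi(X(\cb))\bmod q$, proving (i).

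For part (ii) I would fix an embedding $\overline{\qb}\hookrightarrow\cb$ and let $g$ be the image in $\operatorname{Gal}(K/\qb)$ of complex conjugation $c$; its conjugacy class is non-empty, so Chebotarev yields a positive-density set of primes on which $N_X(p)\equiv\varphi(g)=\operatorname{Tr}(c\mid R\Gamma_c(X_{\overline{\qb}},\zb/q\zb))$. Artin's comparison theorem identifies this perfect complex $c$-equivariantly with $R\Gamma_c(X(\cb),\zb/q\zb)$; representing the latter by the compactly supported cochain complex of a triangulation on which the involution $c$ acts without inversions, the trace of $c$ becomes the alternating count of $c$-fixed cells, i.e. $\chi_c$ of the fixed locus $(X(\cb))^c=X(\rb)$. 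Thus $\varphi(g)=\chi_c(X(\rb))\bmod q$, proving (ii).

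The main obstacle, in both parts, is the passage from \emph{integral} Euler characteristics to the mod-$q$ traces: the trace formula returns the trace on the perfect complex $R\Gamma_c$, not the naive alternating sum of traces on the cohomology groups, and these differ once $H^*_c$ has torsion. For $g=1$ this gap is harmless, since the trace of the identity records only the generic rank, so (i) is essentially formal once the Frobenian structure is in hand. For $g=c$ one must run the Lefschetz fixed-point computation at the level of an equivariant perfect complex and track carefully which invariant survives; it is essential that it is the compactly supported $\chi_c(X(\rb))$ that appears, not $\chi(X(\rb))$, which for a real variety it need not equal. All of this is contained in Serre's lectures \cite{serreVID1}, \cite{serreVID2}. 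In the situation at hand, where $X=\operatorname{Spec}\,\zb[x]/(f)$ is zero-dimensional, the argument collapses to Chebotarev applied to the splitting field of $f$: part (i) says $N_p(f)$ is the number of distinct complex roots of $f$ when $p$ splits completely, and part (ii) says it is the number of real roots of $f$ when $\operatorname{Frob}_p$ is a complex conjugation.
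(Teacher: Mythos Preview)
Your proposal is correct and in fact reconstructs precisely the argument that the paper defers to: the paper's own proof consists solely of the citation \cite{serreVID1} \cite{serreVID2}, with no further details. Your sketch of the Frobenian structure via the trace formula, the choice $g=1$ for (i) and $g=$ complex conjugation for (ii), and the care taken with the perfect complex versus cohomology in the torsion case, are exactly the content of Serre's lectures being cited; your final paragraph specializing to the zero-dimensional case $X=\operatorname{Spec}\zb[x]/(f)$ is also on point for this paper's setting.
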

\begin{proof}
\cite{serreVID1} \cite{serreVID2}.
\end{proof}

The decomposition of $f$
as the sum of the trinomial part
$$-1+x+x^n$$
and the perturbation term
$$x^{m_1}+x^{m_2} + \ldots
+ x^{m_s}$$
for $m_1 - n \geq n-1 , m_{j+1} - m_j \geq n-1 \mbox{~for } 1 \leq j \leq s-1$ suggests to define
$Y={\rm Spec}(\zb[X]/(-1+x+x^n))$
and to study $N_{Y}(p)$
for $p$ tending to infinity.
A first question is about the comparison between
$N_{X}(p)$ and $N_{Y}(p)$, and when $p$ tends 
to infinity.

\textcolor{black}{\bf Question 1}: For any integer $q \geq 1$,
any $\gamma \in \zb/q\zb$ has 
$$\overline{\mathcal{P}_{\gamma}}
:= \{p \mid N_{X}(p) - N_{Y}(p) \equiv \gamma \mod q\}$$
a density?

\textcolor{black}{\bf Question 2}: 
\textcolor{black}{What} is the role of $n$ in the sets
$\{p \mid N_{X}(p) - N_{Y}(p) \equiv \gamma \mod q\}$,
in particular when $n$ becomes very large?

\
\textcolor{black}{
\section{Appendix: The expression of the dynamical zeta function $\zeta_{\beta}(z)$ of the $\beta$-shift when $\beta > 1$ is close to 1}
\label{appendixzeta}
}
\textcolor{black}{
The importance of the class of polynomials 
$\mathcal{B}$
comes from
the formulation of the dynamical zeta function
$\zeta_{\beta}(z)$ of the $\beta$-shift when 
$\beta > 1$ is close to 1. Let us recall the main 
steps, leaving the details for the reader.
}

\textcolor{black}{
The notion of dynamical zeta function
was introduced by M. Artin and B. Mazur
\cite{artinmazur} in 1965.
Let $h : V \mapsto V$ be a diffeomorphism of a compact
manifold $V$, such that its iterates $h^k$ all
have isolated fixed points. Then they
defined
\begin{equation}
\label{dynamicalfunctiongeneral}
\zeta_{\beta}(z) := \exp\Bigl(
\sum_{n=1}^{\infty} \, 
\frac{\#\{x \in V \mid 
h^{n}(x) = x\}}{n} \, z^n
\Bigr),
\end{equation}
and showed that for a dense set of diffeomorphisms
$h$ of $V$ the power series of such an expression
converged in a neighbourhood of $z=0$.
The dynamical zeta function of a dynamical system, when
defined,
is an analytic function
which concentrates a lot of information
on the dynamical system, and therefore is a powerful tool
(Pollicott \cite{pollicott}).  
}

\textcolor{black}{
From Theorem 2 in \cite{baladikeller}, concerning the 
R\'enyi-Parry dynamical numeration system 
$(V=[0,1], h=T_{\beta})$, where $\beta > 1$ and
$T_{\beta}: x \to \beta x \mod 1$ is the $\beta$-transformation
\cite{itotakahashi}
\cite{lagarias}, we deduce
}
\textcolor{black}{
\begin{theorem}
\label{dynamicalzetatransferoperator}
Let $\beta \in (1,\theta_{2}^{-1})$. Then, 
the Artin-Mazur
dynamical zeta function
\begin{equation}
\label{dynamicalfunction}
\zeta_{\beta}(z) := \exp\Bigl(
\sum_{n=1}^{\infty} \, 
\frac{\#\{x \in [0,1] \mid 
T_{\beta}^{n}(x) = x\}}{n} \, z^n
\Bigr),
\end{equation}
counting the number of periodic 
points of period dividing $n$,
is nonzero and meromorphic
in $\{z \in \cb : |z| < 1\}$, and such that
$1/\zeta_{\beta}(z)$ is holomorphic
in $\{z \in \cb : |z| < 1\}$,
\end{theorem}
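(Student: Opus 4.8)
The plan is to deduce the statement from Theorem~2 of Baladi--Keller \cite{baladikeller} on zeta functions of piecewise monotone interval maps, specialised to the $\beta$-transformation $T_\beta:[0,1]\to[0,1]$. The first step is to record that, since $1<\beta<\theta_2^{-1}<2$, the map $T_\beta$ is piecewise linear, uniformly expanding with constant slope $\beta$, and has exactly two monotonicity branches (so $\lceil\beta\rceil=2$); in particular it satisfies the hypotheses of \cite{baladikeller}. One also notes that the power series $\sum_{n\ge1}\tfrac1n\,\#\{x:T_\beta^{n}(x)=x\}\,z^{n}$ defining $\zeta_\beta$ has radius of convergence $1/\beta<1$, because the number of periodic points of period dividing $n$ grows like $\beta^{n}$ (the topological entropy of $T_\beta$ is $\log\beta$); so the theorem really asserts a meromorphic continuation of $\zeta_\beta$ from its natural domain $\{|z|<1/\beta\}$ to the larger disk $\{|z|<1\}$.

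The core of the argument is the identity
$$\zeta_\beta(z)\;=\;-\,\frac{1}{f_\beta(z)},\qquad\text{where}\quad f_\beta(z)=-1+\sum_{i\ge1}t_i z^{i}$$
and $(t_i)_{i\ge1}$ is the quasi-greedy $\beta$-expansion of $1$, so that $f_\beta$ is precisely the Parry Upper function recalled in Section~\ref{S2}. To obtain it I would apply \cite{baladikeller}: for $T_\beta$ their Theorem~2 writes $\zeta_\beta(z)=P(z)/\det(1-z\mathcal{M}_\beta)$, where $\mathcal{M}_\beta\phi(x)=\sum_{T_\beta(y)=x}\phi(y)$ acts on functions of bounded variation on $[0,1]$ and $P$ is an explicit polynomial correction carried by the two branch endpoints and the forward orbit of the turning point $1/\beta$. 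A Lasota--Yorke inequality (all branches of $T_\beta^{n}$ have slope $\beta^{n}$) together with the Hofbauer--Keller quasi-compactness theorem shows $\mathcal{M}_\beta$ is quasi-compact on the space of functions of bounded variation, with essential spectral radius $\le1$ and simple leading eigenvalue $\beta$ (eigenfunction the Parry density); hence $\det(1-z\mathcal{M}_\beta)$ is holomorphic in $\{|z|<1\}$ with only finitely many zeros there, one of them a simple zero at $z=1/\beta$. Comparing Taylor coefficients at $z=0$ of the rational function $P/\det(1-z\mathcal{M}_\beta)$ with those of $-1/f_\beta$ --- equivalently, reading off the generating function of the cyclically admissible words of the $\beta$-shift, the forbidden words being governed by the lexicographic condition relative to $(t_i)$, as in Takahashi \cite{itotakahashi} and Flatto--Lagarias--Poonen \cite{flattolagariaspoonen} --- yields the displayed identity.

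Given the identity, the analytic assertions follow at once. Every coefficient of $f_\beta$ lies in $\{-1,0,1\}$, so the series $f_\beta$ has radius of convergence $\ge1$ and defines a holomorphic function on $\{z\in\cb:|z|<1\}$, with $f_\beta(0)=-1\ne0$, hence $f_\beta\not\equiv0$. Therefore $-1/f_\beta$ is meromorphic on $\{|z|<1\}$ and coincides with $\zeta_\beta$ on $\{|z|<1/\beta\}$; by uniqueness of analytic continuation it \emph{is} the meromorphic continuation of $\zeta_\beta$ to the unit disk, with poles exactly at the zeros of $f_\beta$. In particular $\zeta_\beta$ is nowhere zero in $\{|z|<1\}$ and $1/\zeta_\beta=-f_\beta$ is holomorphic there, which is the assertion. (The hypothesis $\beta<\theta_2^{-1}$ is used only to make $(t_i)_{i\ge1}$ lacunary of the shape $1\,0^{\,n-2}\,1\cdots$ with the distanciation of Section~\ref{S2}, i.e. to make $f_\beta$ a Parry Upper function attached to the class $\mathcal{B}$; the analytic statement itself holds for every $\beta>1$.)

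The main obstacle is the identity $\zeta_\beta=-1/f_\beta$ rather than its analytic consequences. Concretely one must pin down correctly the polynomial correction $P$ in the Baladi--Keller formula --- it encodes the orbit of the turning point $1/\beta$ and the possible eventual periodicity of the $\beta$-expansion of $1$, so the simple-Parry case (where $d_\beta(1)$ is finite) has to be handled through the quasi-greedy expansion --- and one must check that the Lasota--Yorke / quasi-compactness estimates for $T_\beta$ genuinely give essential spectral radius $\le1$ for $\mathcal{M}_\beta$, so that the dynamical determinant, and hence $\zeta_\beta$, is meromorphic on the full open unit disk and not merely on a smaller one. Once this identity is established, the rest of the proof is routine.
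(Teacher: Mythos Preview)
Your argument is correct, but it is considerably more elaborate than what the paper does, and it follows a somewhat different route. The paper's entire proof is the sentence immediately preceding the theorem statement: ``From Theorem~2 in \cite{baladikeller}\ldots we deduce''. That is, meromorphy of $\zeta_\beta$ and holomorphy of $1/\zeta_\beta$ on $\{|z|<1\}$ are read off directly from the Baladi--Keller result for piecewise monotone interval maps, treated as a black box. The explicit identity $\zeta_\beta=-1/f_\beta$ (or its simple-Parry variant $\zeta_\beta=-(1-z^N)/f_\beta$) plays no role here; it is stated as the \emph{next} theorem in the paper (Theorem~\ref{parryupperdynamicalzeta}), attributed to Takahashi and Ito--Takahashi.

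You instead make the explicit formula the core of the argument and derive meromorphy from the fact that $f_\beta$ has coefficients in $\{-1,0,1\}$, hence is holomorphic on $|z|<1$. This is valid and arguably more transparent --- one sees exactly where the poles of $\zeta_\beta$ sit --- and your use of the quasi-greedy expansion to unify the simple and non-simple Parry cases is correct. But then the Baladi--Keller transfer-operator apparatus you set up (the dynamical determinant $\det(1-z\mathcal{M}_\beta)$, Lasota--Yorke, quasi-compactness, the polynomial correction $P$) is largely superfluous: the identity $\zeta_\beta=-1/f_\beta$ was obtained by Ito--Takahashi \cite{itotakahashi} through a direct combinatorial count of periodic points in the $\beta$-shift, without spectral theory. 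In effect you have merged the paper's Theorems~\ref{dynamicalzetatransferoperator} and~\ref{parryupperdynamicalzeta} and proved them simultaneously, at the price of invoking more machinery than either one requires on its own.
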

}

\textcolor{black}{
Let $\beta > 1$ be a real number. Denote
$\mathcal{A}_{\beta} := \{0, 1, 2, \ldots, 
\lceil \beta - 1 \rceil \}$, where
$\lceil \beta - 1 \rceil$ denotes the upper integer part
of $\beta - 1$. If $\beta$ is not
an integer, then 
$\lceil \beta - 1 \rceil
= \lfloor \beta \rfloor$ which is the usual 
integer part of $\beta$.
Using ergodic theory, 
Takahashi \cite{takahashi}
and Ito Takahashi \cite{itotakahashi}
obtained the reformulation
of \eqref{dynamicalfunction}
as follows.
}
\textcolor{black}{
\begin{theorem}
\label{parryupperdynamicalzeta}
Let $\beta > 1$ be a real number.
Then 
\begin{equation}
\label{zetafunctionfraction00}
\zeta_{\beta}(z) = \frac{1 - z^N}{(1 - \beta z)
\Bigl(
\sum_{n=0}^{\infty}T_{\beta}^{n}(1) \, z^n
\Bigr)}
\end{equation}
where $N$, 
which depends upon $\beta$, 
is the minimal positive integer 
such that $T_{\beta}^{N}(1) = 0$;
in the case where
$T_{\beta}^{j}(1) \neq 0$ for all $j \geq 1$,
 ``$z^N$" has to be replaced by ``$0$".
Up to the sign,
the expansion of the power series of the denominator
\eqref{zetafunctionfraction00} is the Parry Upper function
$f_{\beta}(z)$ at $\beta$. It satisfies
\begin{equation}
\label{parryupperdynamicalzeta_ii}
(i)\quad
f_{\beta}(z) = - \frac{1 - z^N}{\zeta_{\beta}(z)}
\qquad
\mbox{in the first case},
\end{equation}
\begin{equation}
\label{parryupperdynamicalzeta_i}
(ii)\qquad f_{\beta}(z) = - \frac{1}{\zeta_{\beta}(z)} 
\qquad \mbox{in the second case},
\end{equation}
and, denoting by $t_1 , t_2 , \ldots \in 
\mathcal{A}_{\beta}$ the coefficients in
\begin{equation}
\label{fbetazetabeta}
-1 +t_1 z + t_2 z^2 + t_3 z^3 + \ldots
= f_{\beta}(z) 
= 
-(1 - \beta z)
\Bigl(
\sum_{n=0}^{\infty}T_{\beta}^{n}(1) \, z^n
\Bigr),
\end{equation}
$f_{\beta}(z)$ 
is such that $0 . t_1 t_2 t_3 \ldots$
is the R\'enyi
$\beta$-expansion of unity $d_{\beta}(1)$.
The Parry Upper function
$f_{\beta}(z)$ 
has no zero in
$\{z \in \cb :|z| \leq 1/\beta\}$ except $z=1/\beta$
which is a simple zero.
\end{theorem}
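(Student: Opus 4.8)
The plan is to split Theorem \ref{parryupperdynamicalzeta} into three essentially independent pieces and treat them separately: (a) the closed form \eqref{zetafunctionfraction00} for the Artin--Mazur zeta function; (b) the algebraic identification of the denominator of \eqref{zetafunctionfraction00}, up to sign and the factor $1-z^{N}$, with the Parry Upper function $f_{\beta}$ and with $d_{\beta}(1)$, which yields \eqref{parryupperdynamicalzeta_ii} and \eqref{parryupperdynamicalzeta_i}; and (c) the localization of the zeros of $f_{\beta}$ inside the closed disc $\{z\in\cb:|z|\le 1/\beta\}$. Part (a) carries all the dynamical content and is where the work lies; parts (b) and (c) are formal.

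For (a) I would follow Takahashi \cite{takahashi} and Ito--Takahashi \cite{itotakahashi}, the goal being to evaluate $P_{n}:=\#\{x\in[0,1]:T_{\beta}^{n}(x)=x\}$. First one passes to the symbolic $\beta$-shift: by Parry's admissibility criterion the greedy $\beta$-expansion is a bijection from $[0,1)$ onto the set of admissible sequences over $\mathcal{A}_{\beta}$ intertwining $T_{\beta}$ with the shift, so $P_{n}$ (up to the single point $x=1$) is the number of admissible sequences of period dividing $n$. One then reads $P_{n}$ off the piecewise-linear structure of $T_{\beta}^{n}$: its maximal affine branches, of slope $\beta^{n}$, are the rank-$n$ cylinders $\Delta_{w}$ indexed by admissible words $w=c_{1}\cdots c_{n}$; a \emph{full} cylinder, whose $T_{\beta}^{n}$-image is all of $[0,1)$, carries exactly one fixed point by the intermediate value theorem together with expansivity, while the \emph{non-full} cylinders --- those lying along the forward orbit of the break point $1$, with images $[0,T_{\beta}^{j}(1))$ --- contribute a correction which, after forming $\sum_{n\ge1}P_{n}z^{n}/n$ and exponentiating, collapses precisely into the numerator $1-z^{N}$ when the orbit of $1$ is finite (and into $1$ otherwise). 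The factor $\sum_{k\ge0}T_{\beta}^{k}(1)z^{k}$ encodes the lengths of those defects level by level, and the factor $1-\beta z$ encodes the common slope $\beta$; meromorphy in the open unit disc is then exactly Theorem \ref{dynamicalzetatransferoperator} (Baladi--Keller). The main obstacle is this accounting: matching the non-full cylinders with the orbit of $1$ so that the over/undercount is exactly $1-z^{N}$, and in particular handling the non-sofic case, where $d_{\beta}(1)$ is infinite and no finite transfer matrix is available.

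For (b) this is a short formal computation. By the greedy algorithm $t_{k}=\lfloor\beta T_{\beta}^{k-1}(1)\rfloor\in\mathcal{A}_{\beta}$ and $T_{\beta}^{k}(1)=\beta T_{\beta}^{k-1}(1)-t_{k}$, with $T_{\beta}^{0}(1)=1$. Hence the coefficient of $z^{k}$ in $-(1-\beta z)\sum_{k\ge0}T_{\beta}^{k}(1)z^{k}$ equals $\beta T_{\beta}^{k-1}(1)-T_{\beta}^{k}(1)=t_{k}$ for $k\ge1$ and $-T_{\beta}^{0}(1)=-1$ for $k=0$, so that power series equals $-1+\sum_{k\ge1}t_{k}z^{k}=f_{\beta}(z)$, which is precisely \eqref{fbetazetabeta}, and $0.t_{1}t_{2}t_{3}\cdots$ is by definition the R\'enyi expansion $d_{\beta}(1)$. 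Dividing \eqref{zetafunctionfraction00} through then gives \eqref{parryupperdynamicalzeta_ii} when $T_{\beta}^{N}(1)=0$ for some minimal $N$, and \eqref{parryupperdynamicalzeta_i} when $T_{\beta}^{j}(1)\neq 0$ for all $j\ge1$.

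For (c) I would argue directly on $f_{\beta}(z)=-1+\sum_{k\ge1}t_{k}z^{k}$: the $t_{k}\ge0$ are integers, they lie in $\mathcal{A}_{\beta}$ hence are bounded (so $\sum_{k\ge1}t_{k}z^{k}$ converges for $|z|<1$), $t_{1}\ge1$ because $\beta>1$, and $\sum_{k\ge1}t_{k}\beta^{-k}=1$ is the defining property of $d_{\beta}(1)$. If $0<|z|<1/\beta$ then $\beta|z|<1$ and, since $t_{1}\ge1$, $\sum_{k\ge1}t_{k}|z|^{k}<\sum_{k\ge1}t_{k}\beta^{-k}=1$, while $f_{\beta}(0)=-1\neq0$; so $f_{\beta}$ has no zero in $|z|<1/\beta$. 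On $|z|=1/\beta$ a zero means $\sum_{k\ge1}t_{k}z^{k}=1$, hence $1=|\sum_{k\ge1}t_{k}z^{k}|\le\sum_{k\ge1}t_{k}|z|^{k}=1$, and equality in the triangle inequality forces every nonzero term $t_{k}z^{k}$ to be a positive real; in particular $t_{1}z$ is a positive real with $|z|=1/\beta$, so $z=1/\beta$. Finally $f_{\beta}'(1/\beta)=\sum_{k\ge1}kt_{k}\beta^{-(k-1)}\ge t_{1}>0$, so $1/\beta$ is a simple zero. Parts (b) and (c) are routine, and the whole difficulty of the theorem is concentrated in the periodic-point count of part (a).
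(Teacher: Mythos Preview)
Your proposal is correct and in fact goes further than the paper itself. The paper does not give a proof of this theorem: it simply attributes the formula \eqref{zetafunctionfraction00} to Takahashi \cite{takahashi} and Ito--Takahashi \cite{itotakahashi} (``Using ergodic theory, Takahashi and Ito--Takahashi obtained the reformulation of \eqref{dynamicalfunction} as follows''), and states the remaining assertions without argument. Your part (a) invokes exactly the same references and correctly isolates the periodic-point count as the only nontrivial step; your parts (b) and (c) supply clean self-contained verifications of \eqref{fbetazetabeta} and of the zero-localization claim, neither of which the paper spells out. The computations in (b) and the triangle-inequality argument in (c), including the equality case on $|z|=1/\beta$ and the simplicity of the zero via $f_{\beta}'(1/\beta)\ge t_{1}>0$, are all sound.
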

}

\textcolor{black}{
The total ordering $<$ on $(1, +\infty)$ is
uniquely in correspondence with the lexicographical
ordering $<_{lex}$ on R\'enyi expansions of 1
by the following Proposition, 
which is Lemma 3 in Parry
\cite{parry}.
}
\textcolor{black}{
\begin{proposition}
\label{variationbasebeta}
Let $\alpha > 1$ and $\beta > 1$. 
If the R\'enyi $\alpha$-expansion of 1 is
$$d_{\alpha}(1) = 0. t'_1 t'_2 t'_3\ldots, 
\qquad ~i.e.
\quad
1 ~=~ \frac{t'_1}{\alpha} + \frac{t'_2}{\alpha^2} + \frac{t'_3}{\alpha^3} + \ldots$$
and the R\'enyi $\beta$-expansion of 1 is
$$d_{\beta}(1) = 0. t_1 t_2 t_3\ldots, 
\qquad ~i.e. 
\quad
1 ~=~ \frac{t_1}{\beta} + \frac{t_2}{\beta^2} + \frac{t_3}{\beta^3} + \ldots,$$
then $\alpha < \beta$ if and only if $(t'_1, t'_2, t'_3, \ldots) 
<_{lex} (t_1, t_2, t_3, \ldots)$. 
\end{proposition}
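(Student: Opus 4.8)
The plan is to derive the equivalence from two one-sided statements about the map $\beta\mapsto d_\beta(1)$ on $(1,+\infty)$: that it is \emph{injective}, and that it is \emph{order-preserving}, i.e. $\alpha<\beta$ implies $(t'_1,t'_2,\ldots)\le_{lex}(t_1,t_2,\ldots)$. Granting these, the proposition follows by pure order theory. If $\alpha<\beta$, the order-preserving property gives $d_\alpha(1)\le_{lex}d_\beta(1)$ while injectivity excludes equality, so $d_\alpha(1)<_{lex}d_\beta(1)$. Conversely, if $d_\alpha(1)<_{lex}d_\beta(1)$, then $\alpha=\beta$ is impossible (identical bases give identical expansions) and $\alpha>\beta$ is impossible (applying the order-preserving property to $\beta<\alpha$ would yield $d_\beta(1)\le_{lex}d_\alpha(1)$, contradicting that $<_{lex}$ is a strict total order); hence $\alpha<\beta$. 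So everything reduces to injectivity and monotonicity.

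\emph{Injectivity.} This is immediate from the identity $1=\sum_{k\ge1}t_k\beta^{-k}$ recorded in the statement (which also follows from \eqref{fbetazetabeta} together with Theorem~\ref{parryupperdynamicalzeta}, since $f_\beta$ vanishes at $1/\beta$): the digits $t_k$ lie in the finite alphabet $\mathcal{A}_\beta$, hence are bounded, so $g(x):=\sum_{k\ge1}t_kx^{-k}$ converges and is strictly decreasing on $(1,+\infty)$ because $t_1=\lfloor\beta\rfloor\ge1$. Thus $d_\alpha(1)=d_\beta(1)$ would force $g(\alpha)=g(\beta)=1$, whence $\alpha=\beta$.

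\emph{Monotonicity.} Here I would compare the two R\'enyi greedy algorithms term by term. Recall that $d_\beta(1)=0.t_1t_2\ldots$ is generated by $x_0=1$, $t_m=\lfloor\beta x_{m-1}\rfloor$, $x_m=\beta x_{m-1}-t_m=T_\beta^m(1)\in[0,1)$, and likewise $d_\alpha(1)=0.t'_1t'_2\ldots$ by $y_0=1$, $t'_m=\lfloor\alpha y_{m-1}\rfloor$, $y_m=\alpha y_{m-1}-t'_m\in[0,1)$. Assume $\alpha<\beta$; if the two digit strings coincide we are done, so let $m$ be the first index at which they differ. For $j<m$ the digits agree, and one shows by induction that $y_j\le x_j$: this holds for $j=0$, and if $y_{j-1}\le x_{j-1}$ and $t'_j=t_j$ then $y_j=\alpha y_{j-1}-t_j\le\alpha x_{j-1}-t_j\le\beta x_{j-1}-t_j=x_j$, using $0<\alpha\le\beta$ and $x_{j-1}\ge0$. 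Hence $0\le y_{m-1}\le x_{m-1}$, so $\alpha y_{m-1}\le\beta x_{m-1}$ and $t'_m=\lfloor\alpha y_{m-1}\rfloor\le\lfloor\beta x_{m-1}\rfloor=t_m$; since $m$ is a point of disagreement, this forces $t'_m<t_m$, i.e. $d_\alpha(1)<_{lex}d_\beta(1)$.

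\emph{Where the care goes.} I expect no genuine obstacle, only bookkeeping: (i) the degenerate case of identical digit strings, which is precisely what injectivity disposes of; and (ii) uniformity over all $\beta>1$ --- in particular integer bases and simple Parry numbers, for which $T_\beta^N(1)=0$ and the expansion is eventually $0$ --- where one just notes that $x_m\in[0,1)$ still holds and the induction above is unaffected, so the argument applies verbatim to the greedy expansions used in this paper. This is, in essence, the argument of Lemma~3 in Parry \cite{parry}.
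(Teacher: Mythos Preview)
The paper does not give its own proof of this proposition; it merely states it and attributes it to Lemma~3 of Parry \cite{parry}. Your argument is correct and is, as you yourself note at the end, essentially Parry's: the term-by-term comparison of the two greedy algorithms, with the inductive inequality $y_j\le x_j$ forcing the first discrepancy to satisfy $t'_m<t_m$, together with the strict monotonicity of $x\mapsto\sum_{k\ge1}t_kx^{-k}$ on $(1,+\infty)$ to rule out coincident digit strings.
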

}

\textcolor{black}{
\begin{theorem}
\label{zeronzeron}
Let $n \geq 2$. A real number 
$\beta \in ( 1, \frac{1+\sqrt{5}}{2}\, ]$ 
belongs to   
$[\theta_{n+1}^{-1} , \theta_{n}^{-1})$ if and only if the 
R\'enyi $\beta$-expansion of unity 
$d_{\beta}(1)$
is of the form
\begin{equation}
\label{dbeta1nnn}
d_{\beta}(1) = 0.1 0^{n-1} 1 0^{n_1} 1 0^{n_2} 1 0^{n_3} \ldots,
\end{equation}
with $n_k \geq n-1$ for all $k \geq 1$.  
\end{theorem}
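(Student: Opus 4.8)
The plan is to convert the interval condition into a lexicographic condition on $d_\beta(1)$ via Proposition~\ref{variationbasebeta}, after first pinning down the R\'enyi expansions of the two endpoints. Fix $m\ge 2$ and put $\beta=\theta_m^{-1}\in(1,2)$. Using the defining identity $\theta_m+\theta_m^{\,m}=1$, one iterates the $\beta$-transformation: $T_\beta(1)=\beta-1=\theta_m^{\,m-1}$, then $T_\beta^{\,k}(1)=\theta_m^{\,m-k}$ for $1\le k\le m-1$ and $T_\beta^{\,m}(1)=0$, which reads off $d_{\theta_m^{-1}}(1)=0.1\,0^{m-2}\,1$ (equivalently $f_{\theta_m^{-1}}(z)=G_m(z)=-1+z+z^m$, by Theorem~\ref{parryupperdynamicalzeta}). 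In particular $d_{\theta_{n+1}^{-1}}(1)=0.1\,0^{n-1}\,1$ and $d_{\theta_n^{-1}}(1)=0.1\,0^{n-2}\,1$.

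For ``$\Leftarrow$'': suppose $d_\beta(1)=0.1\,0^{n-1}\,1\,0^{n_1}\,1\,0^{n_2}\cdots$ with every $n_k\ge n-1$ (including the finite case, where the list of $n_k$ simply terminates). On positions $1,\dots,n+1$ this agrees with $0.1\,0^{n-1}\,1$ and then continues with nonnegative digits, so $d_\beta(1)\ge_{lex}d_{\theta_{n+1}^{-1}}(1)$; comparing instead with $d_{\theta_n^{-1}}(1)=0.1\,0^{n-2}\,1$, the first discrepancy is at position $n$, where $d_\beta(1)$ has a $0$ and $d_{\theta_n^{-1}}(1)$ has a $1$, so $d_\beta(1)<_{lex}d_{\theta_n^{-1}}(1)$. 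Proposition~\ref{variationbasebeta} then gives $\theta_{n+1}^{-1}\le\beta<\theta_n^{-1}$.

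For ``$\Rightarrow$'': suppose $\theta_{n+1}^{-1}\le\beta<\theta_n^{-1}$. Since $\beta<\theta_n^{-1}\le\frac{1+\sqrt5}{2}<2$, the leading digit is $t_1=\lfloor\beta\rfloor=1$ and the alphabet is $\mathcal A_\beta=\{0,1\}$. Proposition~\ref{variationbasebeta} gives $(1,0^{n-1},1,0,0,\dots)\le_{lex}(t_k)_{k\ge1}<_{lex}(1,0^{n-2},1,0,0,\dots)$. The upper bound forces $t_2=\cdots=t_n=0$ (a positive digit at a position in $\{2,\dots,n-1\}$ would make $(t_k)$ lex-larger, and $t_n=1$ would force all subsequent digits to vanish, i.e.\ $\beta=\theta_n^{-1}$, which is excluded), and then the lower bound forces $t_{n+1}\ge1$, hence $t_{n+1}=1$. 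Thus $d_\beta(1)=0.1\,0^{n-1}\,1\,t_{n+2}t_{n+3}\cdots$ with all digits in $\{0,1\}$, and its tail is written uniquely as $0^{n_1}\,1\,0^{n_2}\,1\cdots$. It remains to see $n_k\ge n-1$ for every $k$, and this is where Parry's self-admissibility of the R\'enyi expansion of unity \cite{parry} enters: every proper left shift of $d_\beta(1)$ (resp.\ of its quasi-greedy companion $d_\beta^{*}(1)$, used when $d_\beta(1)$ is finite) is strictly lex-smaller than $d_\beta(1)$ itself. Taking the shift beginning at the $1$ immediately preceding the block $0^{n_k}$, the shifted sequence starts $1\,0^{n_k}\,1\cdots$; if $n_k\le n-2$ it would carry a $1$ in position $n_k+2\le n$ while $d_\beta(1)$ carries a $0$ there, contradicting admissibility. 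Hence $n_k\ge n-1$ for all $k$, which is the claimed form. (The one point $\beta=\frac{1+\sqrt5}{2}=\theta_2^{-1}$ in the stated range lies in no interval $[\theta_{n+1}^{-1},\theta_n^{-1})$ and has $d_\beta(1)=0.11$, of no such form, so the equivalence is trivially true there.)

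The step I expect to be the real obstacle is the last one of ``$\Rightarrow$'': invoking Parry's admissibility condition in exactly the right form and handling the bookkeeping for finite expansions carefully (passing to $d_\beta^{*}(1)$ and checking that decrementing the terminal digit does not make the putative period imprimitive, so that the strict shift-inequality still yields $n_k\ge n-1$ rather than only $n_k\ge n-2$). Everything else reduces, once $d_{\theta_m^{-1}}(1)=0.1\,0^{m-2}\,1$ is established, to elementary lexicographic comparisons fed into Proposition~\ref{variationbasebeta}.
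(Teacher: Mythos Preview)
Your proof is correct and follows essentially the same route as the paper: compute the endpoint expansions $d_{\theta_m^{-1}}(1)=0.1\,0^{m-2}\,1$, use Proposition~\ref{variationbasebeta} for sufficiency and for pinning down the prefix $0.1\,0^{n-1}\,1$ in the necessity direction, then invoke Parry's self-admissibility to force each $n_k\ge n-1$. Your stated worry about passing to $d_\beta^{*}(1)$ in the finite case is unnecessary, since the strict inequality $\sigma^k(d_\beta(1))<_{lex}d_\beta(1)$ already holds for the greedy expansion itself (padded with trailing zeros), so the shift argument goes through unchanged.
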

}

\textcolor{black}{
\begin{proof} 
Since $d_{\theta_{n+1}^{-1}}(1) = 0.1 0^{n-1} 1$ and
$d_{\theta_{n}^{-1}}(1) = 0.1 0^{n-2} 1$, 
Proposition \ref{variationbasebeta} implies that
the condition is sufficient. It is also necessary:
$d_{\beta}(1)$ begins as $0.1 0^{n-1} 1$
for all $\beta$ such that
$\theta_{n+1}^{-1} \leq \beta < \theta_{n}^{-1}$.
For such $\beta$s 
we write $d_{\beta}(1) = 0.1 0^{n-1} 1 u$~  
with digits in the alphabet
$\mathcal{A}_{\beta}
=\{0, 1\}$ common to all $\beta$s, that is
$$u= 1^{h_0} 0^{n_1} 1^{h_1} 0^{n_2} 1^{h_2} \ldots$$
and $h_0, n_1, h_1, n_2, h_2, \ldots$ integers $\geq 0$.
The Conditions of Parry (\cite{lothaire} Chap. 7)
applied to the sequence
$(1, 0^{n-1}, 1^{1+h_0}, 0^{n_1}, 1^{h_1}, 0^{n_2}, 1^{h_3}, \ldots)$,
which characterizes uniquely the base of numeration $\beta$, 
readily implies
$h_0 = 0$ and 
$~h_k = 1$ and 
$n_k \geq n-1$ for all $k \geq 1$.
\end{proof}
The polynomials of the class $\mathcal{B}$ are all
the polynomial sections of the power series
$f_{\beta}(z)$ for $\beta$ in the interval 
$(1, (1+\sqrt{5})/2)$. Indeed, from
\eqref{dbeta1nnn} 
and \eqref{fbetazetabeta}, the power series
in \eqref{fbetazetabeta} takes the form:
$$-1 + x + x^n + x^{m_1}+x^{m_2}+\ldots+x^{m_s}+\ldots$$
with the distanciation conditions:
$m_1 - n \geq n-1$, 
$m_{q+1}-m_q \geq n-1$ for $1 \leq q$.
}

\section*{Acknowledgements}

We would like to thank Florent Jouve for a very helpful 
discussion and the referees for their useful comments.

 \frenchspacing

\end{document}